\def\bf{{\mathbf f}}
\def\bk{{\mathbf k}}
\def\bp{{\mathbf p}}
\def\bd{{\mathbf d}}
\def\be{{\mathbf e}}
\def\bu{{\mathbf u}}
\def\bv{{\mathbf v}}
\def\bq{{\mathbf q}}
\def\bA{{\mathbf A}}
\def\bW{{\mathbf W}}
\def\R{{{\mathbb R}}}
\def\N{{{\mathbb N}}}
\def\Z{{{\mathbb Z}}}
\newcommand{\LRs}{L_2(\mathbb{R}^{\rm dim})}
\newcommand{\LR}{L_2(\mathbb{R})}
\def\wt{\widetilde}
\def\gbo{{\boldsymbol \omega}}
\newtheorem{definition}{Definition}[section]
\newtheorem{theorem}{Theorem}[section]
\newtheorem{proposition}{Proposition}[section]
\newtheorem{lemma}{Lemma}[section]
\newtheorem{corollary}{Corollary}[section]
\newtheorem{example}{Example}[section]
\newtheorem{remark}{Remark}[section]
\numberwithin{equation}{section}
\begin{document}

\title{Image Restoration: A General Wavelet Frame Based Model and Its Asymptotic Analysis}
\author{Bin Dong${}^{1}$,\; Zuowei Shen${}^2$ \; and Peichu Xie ${}^2$\\
\\
${}^{1}${\it {\small  Beijing International Center for Mathematical Research (BICMR)}}\\
{\it {\small Peking University, Beijing, China, 100871}}\\
{\small dongbin@math.pku.edu.cn (corresponding author)}\\
\\
${}^2${\it {\small Department of Mathematics, National University of Singapore}}\\
{\small {\it 10 Lower Kent Ridge Road, Singapore, 119076 }}\\
{\small matzuows@nus.edu.sg (Zuowei Shen),  \; xie@u.nus.edu (Peichu Xie)} }

\footnotetext[1]{Bin Dong is supported in part by the Thousand Talents Plan of China.}
\footnotetext[2]{Zuowei Shen is supported by the Tan Chin Tuan Centennial Professorship at National University of Singapore.}

\maketitle

\begin{abstract}
Image restoration is one of the most important areas in imaging science. Mathematical tools have been widely used in image restoration, where wavelet frame based approach is one of the successful examples. In this paper, we introduce a generic wavelet frame based image restoration model, called the ``general model", which includes most of the existing wavelet frame based models as special cases. Moreover, the general model also includes examples that are new to the literature. Motivated by our earlier studies \cite{CDOS2011,DJS2013,CDS2014}, We provide an asymptotic analysis of the general model as image resolution goes to infinity, which establishes a connection between the general model in discrete setting and a new variatonal model in continuum setting. The variational model also includes some of the existing variational models as special cases, such as the total generalized variational model proposed by \cite{bredies2010total}. In the end, we introduce an algorithm solving the general model and present one numerical simulation as an example.
\end{abstract}

\section{Introduction}

Image restoration, including image denoising, deblurring, inpainting, medical imaging, etc., is one of the most important areas in imaging science. Image restoration problems can be formulated as the following linear inverse problem
\begin{equation}\label{LinearInverseProblem}
\mathbf f=\mathbf A \mathbf u+\mathbf \varepsilon,
\end{equation}
where the matrix $\mathbf{A}$ is some linear operator (not invertible in general) and $\mathbf{\varepsilon}$ denotes a perturbation caused by the additive noise in the observed image, which is typically assumed to be white Gaussian noise. As convention, we regard an image as a discrete function $\mathbf u$ defined on a regular grid $\mathbf{O}\subset (h\mathbb Z)^2$ (where $h$ indicates the size of each pixel): $\mathbf u:\mathbf O\to\mathbb{R}$.

Different image restoration problem corresponds to a different type of $\mathbf{A}$ in \eqref{LinearInverseProblem}. For example, $\mathbf A$ is the identity operator for image denoising, a restriction operator for image inpainting, a convolution operator for image deblurring, a partial collection of line integrations for CT imaging, a partial Fourier transform for MR Imaging, etc. The problem \eqref{LinearInverseProblem} is usually ill-posed, which makes solving \eqref{LinearInverseProblem} non-trivial. A naive inversion of $\mathbf{A}$ may result in a recovered image with amplified noise and smeared-out edges. A good image restoration method should be capable of smoothing the image so that noise is suppressed to the greatest extend, while at the same time, restoring or preserving important image features such as edges, ridges, corners, etc.

Most of the existing image restoration methods are transformation based. A good transformation for image restoration should be capable of capturing both global patterns and local features of images. The global patterns are smooth image components that provide a global view of images, while the local features are sharp image components that characterize local singularities of images. Wavelet frame transform is one of the successful examples. Wavelet frames represent images as an addition of global patterns, i.e. smooth image components, and local features, i.e. image singularities. In wavelet frame domain, global patterns are represented by densely distributed coefficients obtained from low-pass filtering, while local features are represented by sparse coefficients obtained from high-pass filtering. Therefore, wavelet frames can effectively separate smooth image components and image features, which is the key to their success in image restoration. Thanks to redundancy, wavelet frame systems have enough flexibility to better balance between smoothness and sparsity than (bi)orthogonal wavelets so that artifacts generated by the Gibbs phenomenon can be further reduced, which in turn leads to better image reconstruction. In addition to providing sparse approximation to local image features, the large coefficients from high-pass filtering can also be used to accurately detect the locations and estimate the types of image singularities. In other words, these coefficients also provide reliable analysis and classifications of local image features in the transform domain.

There are many different wavelet or wavelet frame based image restoration models proposed in the literature including the synthesis based approach \cite{daubechies2007iteratively,fadili2005sparse,fadili2009inpainting,figueiredo2003algorithm,figueiredo2005bound}, the analysis based approach \cite{cai2009split,elad2005simultaneous,starck2005image}, and the balanced approach \cite{chan2003wavelet,CCSS,cai2008simultaneous}. For images that are better represented by a composition of two layers which can each be sparsely approximated by two different frame systems, two-system models were proposed in \cite{cai2009split,elad2005simultaneous, starck2005image,dong2011wavelet,CDOS2011}. Although all these models are different in form from each other, they all share the same modeling philosophy, i.e. to penalize the $\ell_1$-norm (or more generally, any sparsity promoting norms) of the sparse coefficients in wavelet frame domain. This is because wavelet frame systems can sparsely approximate local features of piecewise smooth functions such as images.

In this paper, we study a generic wavelet frame based image restoration model which includes most of the aforementioned models as special cases. This model shall be referred to as the ``general model" for image restoration. Moreover, the general model also includes some models that are new to the literature. Now, we present the general model for wavelet frame based image restoration as follows:
\begin{equation}\label{uni}
\inf_{\mathbf u,\mathbf v}\left\{a\|\mathbf{W'}\mathbf{u}-\mathbf{v}\|^p_{\ell_p(\mathbf O)}+b\|\mathbf W''\mathbf{v}\|^q_{\ell_q(\mathbf O)}+\frac{1}{2}\|\mathbf A \mathbf u-\mathbf f\|_{\ell_2(\mathbf O)}^2\right\},
\end{equation}
where $\mathbf W'$ and $\mathbf W''$ are wavelet frame transforms associated to two wavelet frame systems, and $1\leq p,q\leq 2$. Here, $\mathbf u$ is the image to be recovered, and $\mathbf v$ lives in the transform domain of $\mathbf W'$ which is essentially a vector field. The wavelet frame system corresponding to the transform $\mathbf W''$ consists of subsystems that are applied to each of the components of $\mathbf v$. For clarity of the presentation, details of the definition of \eqref{uni} will be postponed to a later section.

Now, we observe that the general model \eqref{uni} indeed takes many existing wavelet frame based models as special cases.

\medskip{\noindent\textbf{Case A:}} Let $\mathbf W'=\mathbf W$ be a certain wavelet frame transform, and $\mathbf W''=\emph{\emph{Id}}$; and choose $p=2,\ q=1$. By fixing $\mathbf u\equiv\mathbf W^T\mathbf v$, the general model \eqref{uni} becomes the \emph{Balanced Model} of \cite{chan2003wavelet,CCSS,cai2008simultaneous}:
\begin{equation}\label{balanced_discrete}
\inf_{\mathbf v}\left\{a\|(\emph{Id}-\mathbf W\mathbf W^T)\mathbf v\|^2_{\ell_2(\mathbf O)}+b\|\mathbf v\|_{\ell_1(\mathbf O)}+\frac{1}{2}\|\mathbf A \mathbf W^T\mathbf v-\mathbf f\|_{\ell_2(\mathbf O)}^2\right\}
\end{equation}
If we further enforce the condition $a=0$ in model (\ref{balanced_discrete}), then we obtain the \emph{Synthesis Model} \cite{daubechies2007iteratively,fadili2005sparse,fadili2009inpainting,figueiredo2003algorithm,figueiredo2005bound}:
\begin{equation}\label{synthesis_discrete}
\inf_{\mathbf v}\left\{b\|\mathbf{v}\|_{\ell_1(\mathbf O)}+\frac{1}{2}\|\mathbf A \mathbf W^T\mathbf v-\mathbf f\|_{\ell_2(\mathbf O)}^2\right\}
\end{equation}
If we formally set $a=\mathcal 1$ in (\ref{balanced_discrete}), or more strictly, set $\mathbf v= \mathbf 0$ directly in (\ref{uni}), we obtain the following \emph{Analysis Model} \cite{cai2009split,elad2005simultaneous,starck2005image}:
\begin{equation}\label{analysis_discrete}
\inf_{\mathbf u}\left\{a\|\mathbf{W}\mathbf{u}\|_{\ell_1(\mathbf O)}+\frac{1}{2}\|\mathbf A \mathbf u-\mathbf f\|_{\ell_2(\mathbf O)}^2\right\}
\end{equation}

\medskip{\noindent\textbf{Case B:}} Let $\mathbf W'=\mathbf W''=\mathbf W$, $\mathbf v=\mathbf W\mathbf u_2$, $\mathbf u=\mathbf u_1+\mathbf u_2$, and $p=q=1$. The general model (\ref{uni}) becomes the two-layers \emph{Wavelet-Packet Model} of \cite{CDOS2011}:
\begin{equation}\label{packet_discrete}
\inf_{\mathbf u_1,\mathbf u_2}\left\{a\|\mathbf W\mathbf u_1\|_{\ell_1(\mathbf O)}+b\|\mathbf W^2\mathbf u_2\|_{\ell_1(\mathbf O)}+\frac{1}{2}\|\mathbf A (\mathbf u_1+\mathbf u_2)-\mathbf f\|_{\ell_2(\mathbf O)}^2\right\}.
\end{equation}

\medskip{\noindent\textbf{Case C:}} Let $p=q=1$. The general model (\ref{uni}) becomes the more general two-layers model proposed in \cite{CDOS2011}:
\begin{equation}\label{twolayers_discrete}
\inf_{\mathbf u,\mathbf v}\left\{a\|\mathbf{W'}\mathbf{u}-\mathbf{v}\|_{\ell_1(\mathbf O)}+b\|\mathbf W''\mathbf{v}\|_{\ell_1(\mathbf O)}+\frac{1}{2}\|\mathbf A \mathbf u-\mathbf f\|_{\ell_2(\mathbf O)}^2\right\}.
\end{equation}

The general model \eqref{uni} also includes the following new model as its special case.

\medskip{\noindent\textbf{Case D (New):}} Let $p=1$ and $q=2$. The general model (\ref{uni}) becomes the following model:
\begin{equation}\label{newmodel_discrete}
\inf_{\mathbf u,\mathbf v}\left\{a\|\mathbf{W'}\mathbf{u}-\mathbf{v}\|_{\ell_1(\mathbf O)}+b\|\mathbf W''\mathbf{v}\|^2_{\ell_2(\mathbf O)}+\frac{1}{2}\|\mathbf A \mathbf u-\mathbf f\|_{\ell_2(\mathbf O)}^2\right\}.
\end{equation}
When model \eqref{newmodel_discrete} is used, the image to be recovered is understood as having two layers: one layer contains sharp image features while the other layer consists of smooth image components. To see this, we let $\mathbf v=\mathbf W'\mathbf u_2$, $\mathbf u=\mathbf u_1+\mathbf u_2$. Then \eqref{newmodel_discrete} becomes
\begin{equation}\label{newmodel_discrete:u1u2}
\inf_{\mathbf u_1,\mathbf u_2}\left\{a\|\mathbf W'\mathbf u_1\|_{\ell_1(\mathbf O)}+b\|\mathbf W''\mathbf W'\mathbf u_2\|^2_{\ell_2(\mathbf O)}+\frac{1}{2}\|\mathbf A (\mathbf u_1+\mathbf u_2)-\mathbf f\|_{\ell_2(\mathbf O)}^2\right\}.
\end{equation}
The penalization of the $\ell_1$-norm of $\mathbf W'\mathbf u_1$ ensures sharp image features are well captured by $\mathbf u_1$, while the penalization of the $\ell_2$-norm of $\mathbf W''\mathbf W'\mathbf u_2$ ensures the smooth image components are well captured by $\mathbf u_2$. Note that we present model \eqref{newmodel_discrete:u1u2} to show that \eqref{newmodel_discrete} implicitly assumes that the image to be recovered contains two layers. They are not equivalent in general since $\mathbf v$ in \eqref{newmodel_discrete} does not have to be in the range of $\mathbf W'$.

Notably, the model \eqref{newmodel_discrete} is related to the newly proposed \emph{piecewise smooth image restoration model} of \cite{CDS2014}. We first recall the piecewise smooth model of \cite{CDS2014} as follows
\begin{equation}\label{Energy:PS:CDS}
\inf_{\mathbf{u},\ \mathbf{\Lambda}\subset\mathbf O}\ a\left\|\left[\mathbf{W}\mathbf{u}\right]_{\mathbf{\Lambda}}\right\|_{\ell_1(\mathbf O)}+b\left\|\left[\mathbf{W}\mathbf{u}\right]_{\mathbf{\Lambda}^c}\right\|_{\ell_2(\mathbf O)}^2+\frac12\|\mathbf{Au}-\mathbf{f}\|_{\ell_2(\mathbf O)}^2,
\end{equation}
where $\mathbf \Lambda$ is a sub-index set of $\mathbf O$ that indicates the locations of the image singularities, and $\left[\mathbf{W}\mathbf{u}\right]_{\mathbf{\Lambda}}$ (resp. $\left[\mathbf{W}\mathbf{u}\right]_{\mathbf{\Lambda^c}}$) denotes the restricted coefficients on set $\mathbf\Lambda$ (resp. $\mathbf \Lambda^c$). The image recovered by the piecewise smooth model \eqref{Energy:PS:CDS} can be written as $\mathbf u=\mathbf u_1+\mathbf u_2$ where
\begin{equation*}
\mathbf u_1=
\begin{cases}
[\mathbf u]_{\mathbf{\Lambda}} & \mbox{on } \mathbf\Lambda\cr
0&\mbox{elsewhere}
\end{cases}
\quad\mbox{and}\quad \mathbf u_2=
\begin{cases}
[\mathbf u]_{\mathbf{\Lambda}^c} & \mbox{on } \mathbf\Lambda^c\cr
0&\mbox{elsewhere.}
\end{cases}
\end{equation*}
Therefore, the piecewise smooth image restoration model \eqref{Energy:PS:CDS} assumes the image to be recovered consists of two layers where one contains sharp image features and the other contains smooth image components.

Comparing model \eqref{newmodel_discrete:u1u2} with the piecewise smooth model \eqref{Energy:PS:CDS}, we can see that both models assume the images to be recovered can be decomposed into an addition of sharp image features $\mathbf u_1$ and smooth image components $\mathbf u_2$ via the penalization of the $\ell_1$-norm of the wavelet frame coefficients of $\mathbf u_1$ and the $\ell_2$-norm of the wavelet frame coefficients of $\mathbf u_2$. However, the difference between them is that $\mathbf u_1$ of the piecewise smooth model contains only sharp image features while $\mathbf u_1$ of the model \eqref{newmodel_discrete:u1u2} contains both sharp and some smooth image components. In other words, the decomposition $\mathbf u=\mathbf u_1+\mathbf u_2$ of the piecewise smooth model is non-overlapping, while that of the model \eqref{newmodel_discrete:u1u2} has some overlaps. It is not clear at this point whether such overlapping will lead to better image restoration results or not. However, model \eqref{newmodel_discrete:u1u2} (as well as model \eqref{newmodel_discrete}) is convex while the piecewise smooth model \eqref{Energy:PS:CDS} is nonconvex. Therefore, one may expect better behavior and theoretical support for the numerical algorithms solving \eqref{newmodel_discrete:u1u2} (and \eqref{newmodel_discrete} as well). To properly compare the two models and their associated algorithms, we need to conduct comprehensive numerical studies. However, we shall omit these numerical studies in this paper since our focus is to provide a theoretical study of the general model \eqref{uni}. Nonetheless, numerical experiments in \cite{CDS2014} on the piecewise smooth model \eqref{Energy:PS:CDS} showed the advantage of the modeling philosophy that is adopted by both \eqref{Energy:PS:CDS} and \eqref{newmodel_discrete}, i.e. modeling images as a summation of one image layer encoding sharp image features and another layer encoding smooth image components, and penalizing the $\ell_1$-norm and $\ell_2$-norm of them in transform domain respectively.

\subsection{Analyzing Model \eqref{uni}}

The main objective, as well as contribution, of this paper is to provide an asymptotic analysis of the general model \eqref{uni} as image resolution goes to infinity. This work is motivated by earlier studies of \cite{CDOS2011,DJS2013,CDS2014}, where it was shown that wavelet frame transforms are discretization of differential operators in both variational and PDE frameworks, and such discretization is superior to some of the traditional finite difference schemes for image restoration. In particular, fundamental connection of wavelet frame based approach to total variation model \cite{ROF} was established in \cite{CDOS2011}, to the Mumford-Shah model \cite{mumford1989optimal} was established in \cite{CDS2014} and to nonlinear evolution PDEs in \cite{DJS2013}. This new understanding essentially merged the two seemingly unrelated areas: wavelet frame base approach and PDE based approach. It also gave birth to many innovative and more effective image restoration models and algorithms. Therefore, an asymptotic analysis of the general model \eqref{uni} is important to the understanding of the model, as well as the corresponding variational model.

In \cite{CDOS2011}, asymptotic analysis of the wavelet frame based analysis model \eqref{analysis_discrete} was provided. Asymptotic analysis of the piecewise smooth model \eqref{Energy:PS:CDS} with a fixed $\mathbf\Lambda$ was given in \cite{CDS2014}. However, asymptotic analysis of many other wavelet frame based models proposed in the literature, such as the examples in Case A-D we presented in the previous subsection, is still missing. In this paper, we give a unified analysis of all these wavelet frame based models by providing an asymptotic analysis of the general model \eqref{uni}.

In model \eqref{uni}, we view images as data samples of functions at a given resolution. The discrete wavelet frame coefficients are obtained by applying wavelet frame filters to the given image data. Since the operation of high-pass filtering in the wavelet frame transform can be regarded as applying a certain finite difference operator on the image, one can easily show using Taylor's expansion that when images are sampled from functions that are smooth enough, wavelet frame transforms indeed approximate differential operators if each of the wavelet frame band is properly weighted. This motivates us that there is a certain variational model to which the general model \eqref{uni} approximates. However, we need to justify such approximation in more general function spaces than smooth function spaces since images are by no means smooth. This requires more sophisticated analysis than simple Taylor's expansion.

The analysis used in this paper is based on what was used in \cite{CDOS2011,CDS2014}. Let $\psi_{n,\bk}$ be a wavelet frame function and $\phi_{n,\bk}$ the corresponding refinable function at scale $n\in\Z$ and location $\bk\in\mathbf O$. An image $\mathbf u$ is understood as a discrete sample of the associated function $u$ via the inner product $\mathbf u[\bk]=\delta_n\langle u,\phi_{n,\bk}\rangle$ where $\delta_n$ is a constant depending on $n$. When discrete wavelet frame transform is applied on $\mathbf u$, the transform corresponding to the element $\psi_{n-1,\bk}$ produces a coefficient proportional to $\langle u,\psi_{n-1,\bk}\rangle$. One key observation that is crucial to the analysis of \cite{CDOS2011,CDS2014} is that there exists a function $\varphi$ associated to $\psi$ with non-zero integration and enough smoothness, such that $\langle u,\psi_{n-1,\bk}\rangle$ is proportional to $\langle D u,\varphi_{n-1,\bk}\rangle$, where $D$ is a differential operator depending on the property of the wavelet frame function $\psi$. In other words, the wavelet frame coefficient associated to $\psi_{n-1,\bk}$ can be understood as a sampling of $D u$ via $\widetilde\delta_n\langle D u,\varphi_{n-1,\bk}\rangle$ with $\widetilde\delta_n$ a constant depending on $n$.

Based on the aforementioned observations, we are able to find the variational model corresponding to the general model \eqref{uni}. We will show that the objective function of an equivalent form of the general model \eqref{uni} $\Gamma$-converges (see e.g. \cite{dal1993introduction}) to the energy functional of the variational model as image resolution goes to infinity. Through the $\Gamma$-convergence, connections of the approximate minimizers of the general model to those of the variational model are also established. A summary of our main findings is given in the next subsection.

\subsection{Main Results}

We assume all functions/images we consider are defined on the open unit square $\Omega:=(0,1)^2\subset\R^2$. Let $\mathbf O_n\subset\Omega$ be a $2^n\times 2^n$ Cartesian grid on $\bar\Omega$ with $n\in\N$ indicating the resolution of the grid. Let $\mathbf K_n \subset \mathbf O_n$ be an appropriate index set on which wavelet transforms are well-defined. Let $\mathbf u_n$ be a real-valued array defined on $\mathbf K_n$, i.e. $\mathbf u_n\in\R^{|\mathbf K_n|}$, and $\mathbf v_n$ be a vector-valued array on $\mathbf K_n$ with $J$ components, i.e. $\mathbf v_n\in\R^{J\cdot|\mathbf K_n|}$. Denote $\mathbf W'_n: \R^{|\mathbf K_n|}\mapsto\R^{J|\mathbf K_n|}$ and $\mathbf W''_n:\R^{J|\mathbf K_n|}\mapsto\R^{J^2|\mathbf K_n|}$ be wavelet frame transforms with each band weighted by a certain scalar depending on $n$. Details of these definitions can be found in Section \ref{S:Preliminaries}.

We start with a more precise definition of the general model \eqref{uni}.
\begin{definition}
At a given resolution $n\in\N$, rewrite the general model \eqref{uni} as the following optimization problem:
\begin{equation}\label{main}
\inf_{\mathbf u_n, \mathbf v_n} F_n(\mathbf u_n, \mathbf v_n)
\end{equation}
where
\begin{equation*}
F_n(\mathbf u_n, \mathbf v_n):=\nu_1\|\mathbf W'_n\mathbf{u}_n-\mathbf{v}_n\|^p_{\ell_p(\mathbf{K}_n;,\ell_2)}+\nu_2\|\mathbf W''_n\mathbf{v}_n\|^q_{\ell_q(\mathbf{K}_n;,\ell_2)}+\frac{1}{2}\|\mathbf A_n\mathbf{u}_n-\mathbf{f}_n\|_{\ell_2(\mathbf{K}_n)}^2
\end{equation*}
and the norm of the $m$-vector-valued arrays are defined as follows
\begin{equation*}
\left\|\left(\mathbf f_1,\cdots,\mathbf f_{m}\right)\right\|_{\ell_p(\mathbf K_n;,\ell_q)}:=\left(2^{-2n}\sum_{\bk\in\mathbf{K}_n} \left(\sum_{i=1}^{m} |\mathbf f_i[\bk]|^q\right)^{p/q}\right)^{1/p},
\end{equation*}
with $m=J$ for the first term of $F_n$ and $m=J^2$ for the second term of $F_n$.
\end{definition}

Let the operators $\mathbf T_n: L_2(\Omega)\to\mathbb R^{|\mathbf K_n|}$ and $\mathbf S_n: L_2(\Omega;\mathbb R^J)\to\mathbb R^{J\cdot |\mathbf K_n|}$ be sampling operators (see \eqref{D:Tn} and \eqref{D:Sn} for details). We define the functional $E_n(u,v)$ based on the objective function $F_n(\mathbf u_n,\mathbf v_n)$:
\begin{equation*}
E_n(u,v):=F_n(\mathbf T_n u, \mathbf S_n v)\qquad\mbox{with } u\in L_2(\Omega) \mbox{ and } v\in L_2(\Omega;\mathbb R^J).
\end{equation*}
The relation between the problem $\inf_{\mathbf u,\mathbf v} F_n(\mathbf u_n,\mathbf v_n)$ and $\inf_{u,v} E_n(u,v)$ for a fixed $n$ will be given by Proposition \ref{E_nF_n} which states that $\inf_{\mathbf u_n,\mathbf v_n} F_n(\mathbf u_n,\mathbf v_n)=\inf_{u,v} E_n(u,v)$; and for any given minimizer $(\mathbf u_n^\star,\mathbf v_n^\star)$ of $F_n$, one can find $(u_n^\star, v_n^\star)$ that is a minimizer of $E_n$, and vice versa.

\begin{definition}\label{Def:D':D''}
Let $\mathbf{D}'=(D_1',\ldots,D_J')$ be a general differential operator with $\mathbf D':W_s^p(\Omega)\rightarrow W_{s-|\mathbf D'|}^p(\Omega;\mathbb{R}^J)$ for $s>|\mathbf D'|:=\max_j |D_j'|$. Given $u\in W_s^p(\Omega)$, $\mathbf D'u:=(D'_1u,\ldots, D'_Ju)$. One example of $\mathbf D'$ is $\mathbf D'=\nabla$ with $\nabla: W_s^p(\Omega)\rightarrow W_{s-1}^p(\Omega;\mathbb{R}^2)$.

Given $\mathbf D'$, let $\mathbf{D}''=(\mathbf D',\ldots,\mathbf D')$ with $\mathbf D'':W_s^p(\Omega; \R^J)\rightarrow W_{s-|\mathbf D''|}^p(\Omega;\mathbb{R}^{J^2})$ for $s>|\mathbf D''|=|\mathbf D'|$. Given $v\in W_s^p(\Omega; \R^J)$, $\mathbf D''v:=(D'v_1,\ldots,D'v_J)$. For example, when $\mathbf D'=\nabla$, we have $\mathbf D''=(\frac{\partial}{\partial x_1}, \frac{\partial}{\partial x_2}, \frac{\partial}{\partial x_1}, \frac{\partial}{\partial x_2})$ with $\mathbf D'': W_s^p(\Omega,\R^2)\rightarrow W_{s-1}^p(\Omega;\mathbb{R}^4)$.
\end{definition}

\begin{remark}
The differential operator $\mathbf D''$ in Definition \ref{Def:D':D''} is formed by stacking $J$ copies of $\mathbf D'$. Note that we can make $\mathbf D''$ entirely general, i.e. $\mathbf D''=(D_{ij}'')_{1\le i,j\le J}$. The proof of our main theorem can be modified to facilitate such generalization. We only need to adjust the weights at each band of $\mathbf W''_n$ properly. However, for better readability and clarity, we shall focus on the choice of $\mathbf D''$ in Definition \ref{Def:D':D''}.
\end{remark}

We discovered that the corresponding variational model to the discrete model $E_n(u,v)$ takes the following form:
\begin{equation}\label{E}
E(u,v):=\nu_1\left\|\mathbf D'u-v\right\|^p_{L_p(\Omega;\ell_2)}+\nu_2\left\|\mathbf D''v\right\|^q_{L_q(\Omega;\ell_2)}+\frac{1}{2}\left\|Au-f\right\|_{L_2(\Omega)}^2,
\end{equation}
where the norm of the $m$-vector-valued functions are defined as follows
\begin{equation*}
\left\|\left(f_1,\cdots, f_{m}\right)\right\|_{L_p(\Omega;,\ell_q)}:=\left(\int_\Omega \left(\sum_{i=1}^{m} |f_i[\mathbf x]|^q\right)^{p/q} \rm d\mathbf x\right)^{1/p},
\end{equation*}
with $m=J$ for the first term of $E$ and $m=J^2$ for the second term of $E$.

Our main result reads as follows:

{\noindent\textbf{Theorem \ref{main_1}.}} For any given differential operators $\mathbf D'$ and $\mathbf D''$ given by Definition \ref{Def:D':D''} with order $s>0$, one can always select the wavelet frame transforms $\mathbf W_n'$ and $\mathbf W_n''$ with each wavelet frame bands properly weighted, such that $E_n$ $\Gamma$-converges to $E$ under the topology of $W_{2s}^p(\Omega)\times W_{s}^q(\Omega;\mathbb R^J)$.

Based on the $\Gamma$-convergence of Theorem \ref{main_1}, we have the following result that describes the relation between the ($\epsilon$-optimal) solutions of $E_n$ and those of $E$:

{\noindent\textbf{Corollary \ref{cluster_point}}.} If the sequence of the ($\epsilon$-optimal) solutions of $E_n$ has a cluster point $(u^*,v^*)$, then this cluster point $(u^*,v^*)$ is an ($\epsilon$-optimal) solution of $E$.

We finally note that the variational model \eqref{E} is closely related to the total generalized variational (TGV) model of \cite{bredies2010total} if the infinums in \eqref{E} are successively enforced on $u$ and $v$. In particular, we have
\begin{eqnarray*}
\inf_{v}E(u,v)&=&\inf_{v}\left\{\nu_1\left\|\nabla u-v\right\|_{L_1(\Omega;\ell_2)}+\nu_2\left\|\nabla v\right\|_{L_1(\Omega;\ell_2)}\right\}+\frac{1}{2}\left\|Au-f\right\|_{L_2(\Omega)}^2\cr
&=&TGV_{\nu_2,\nu_1}(u)+\frac{1}{2}\left\|Au-f\right\|_{L_2(\Omega)}^2,
\end{eqnarray*}
where
\begin{eqnarray*}
TGV_{\nu_2,\nu_1}(u)=\inf\left\{\left.\int u[\nabla^2 w]\right|w\in C^{\mathcal{1}}(\Omega;\mathbb{R}^{J^2}),\ \|w\|_{L_\mathcal{1}(\Omega;\ell_2)}\leq \nu_2,\ \|\nabla w\|_{L_\mathcal{1}(\Omega;\ell_2)}\leq \nu_1\right\}.
\end{eqnarray*}

\subsection{Organization of the Paper} In Section \ref{S:Preliminaries}, we start with a review of wavelet frames followed by an introduction of basic notation and properties that will be needed in our analysis. Our main results are presented in Section \ref{S:Theory}, where the proof of the main theorem is given based on two technical lemmas that are proved later in Section \ref{SubS:Lemma:PW} and Section \ref{SubS:Lemma:EqCont} respectively. In Section \ref{S:Algorithm}, we propose an algorithm solving the general model. We also present one numerical simulation as an example.

\section{Preliminaries}\label{S:Preliminaries}

\subsection{Wavelet Frames}\label{SubS:wavelet:frames}

In this section, we briefly introduce the concept of wavelet frames. The interested readers should consult \cite{ron1997affine,ron1997affineII,Dau,Daubechies2003,mallat2008wavelet} for theories of frames and wavelet frames, \cite{ShenICM2010,Dong2015} for a short survey on the theory and applications of frames, and \cite{Dong2010IASNotes} for a more detailed survey.

A set $X=\{g_j: j\in\Z\}\subset L_2(\mathbb{R}^{\rm dim})$, with ${\rm dim}\in \N$, is
called a frame of $L_2(\mathbb{R}^{\rm dim})$ if
\begin{equation*}
A\|f\|_{L_2(\R^{\rm dim})}^2\le\sum_{j\in \Z}|\langle f,g_j \rangle|^2\le B \|f\|_{L_2(\R^{\rm dim})}^2, \quad \forall f \in
L_2(\mathbb{R}^{\rm dim}),
\end{equation*}
where $\langle \cdot,\cdot \rangle$ is the inner product of
$L_2(\mathbb{R}^{\rm dim})$. We call $X$ a tight frame if it is a frame with $A=B=1$.
For any given frame $X$ of $L_2(\R^{\rm dim})$, there exists another frame
$\widetilde X=\{\widetilde g_j: j\in\Z\}$ of $L_2(\R^{\rm dim})$ such that
$$f=\sum_{j\in \Z}\langle f,g_j \rangle \widetilde g_j \quad \forall f \in
L_2(\mathbb{R}^{\rm dim}).$$ We call $\widetilde X$ a dual frame of $X$. We shall
call the pair $(X,\widetilde X)$ bi-frames. When $X$ is a tight frame, we have
$$f=\sum_{j\in \Z}\langle f,g_j \rangle g_j \quad \forall f \in
L_2(\mathbb{R}^{\rm dim}).$$

For given $\Psi=\{\psi_1,\ldots,\psi_J\}\subset L_2(\mathbb{R}^{\rm dim})$, the
corresponding quasi-affine system $X^N(\Psi)$, $N\in\Z$ generated by $\Psi$ is defined by
the collection of the dilations and the shifts of $\Psi$ as
\begin{equation}\label{E:Quasi:Affine}
X^N(\Psi)=\{\psi_{j,n,\bk}:\ 1\le j\le J; n\in\mathbb{Z},
\bk\in\Z^{\rm dim}\},
\end{equation}
where $\psi_{j,n,\mathbf{k}}$ is defined by
\begin{equation}\label{E:Quasi:Affine:1}
\psi_{j,n,\mathbf{k}}=\left\{\begin{array}{cc}
2^{\frac{n\cdot{\rm dim}}2}\psi_{j}(2^n\cdot-\mathbf{k}),&n\ge N;\\
2^{(n-\frac{N}{2})\cdot{\rm dim}}\psi_{j}(2^n\cdot-2^{n-N}\mathbf{k}),&n<N.
\end{array}\right.
\end{equation}
When $X^N(\Psi)$ forms a (tight) frame of $L_2(\mathbb{R}^{\rm dim})$, each function
$\psi_{j}$, $j=1,\ldots, J$, is called a (tight) framelet and the whole
system $X^N(\Psi)$ is called a (tight) wavelet frame system. Note that in the
literature, the affine system is commonly used, which
corresponds to the decimated wavelet (frame) transforms. The quasi-affine
system, which corresponds to the so-called undecimated wavelet (frame)
transforms, was first introduced and analyzed by \cite{ron1997affine}. Here,
we only discuss the quasi-affine system \eqref{E:Quasi:Affine:1}, since it
works better in image restoration and its connection to variational models and PDEs is more natural
than the affine system \cite{CDOS2011,DJS2013,CDS2014}. For simplicity, we denote $X(\Psi):=X^0(\Psi)$ and will focus on $X(\Psi)$ for the rest of this subsection. We will return to the generic quasi-affine system $X^N(\Psi)$ later when needed.

The constructions of framelets $\Psi$, which are desirably (anti)symmetric
and compactly supported functions, are usually based on a multiresolution
analysis (MRA) that is generated by some refinable function $\phi$ with
refinement mask $\mathbf{p}$ and its dual MRA generated by $\widetilde\phi$ with
refinement mask $\widetilde\bp$ satisfying
\begin{equation*}
\phi=2^{\rm dim}\sum_{\mathbf{k}\in\Z^{\rm dim}}{\mathbf{p}[\mathbf{k}]\phi(2\cdot-\mathbf{k})}\quad\mbox{and}\quad
\widetilde\phi=2^{\rm dim}\sum_{\mathbf{k}\in\Z^{\rm dim}}{\widetilde\bp[\mathbf{k}]\widetilde\phi(2\cdot-\mathbf{k})}.
\end{equation*}
The idea of an MRA-based construction of bi-framelets $\Psi=\{\psi_1,\ldots,
\psi_J\}$ and $\widetilde\Psi=\{\widetilde\psi_1,\ldots, \widetilde\psi_J\}$
is to find masks $\bq^{(j)}$ and $\tilde\bq^{(j)}$, which are finite sequences,
such that, for $j=1, 2, \ldots, J$,
\begin{equation}\label{psimaskphi}
\psi_{j} =
2^{\rm dim}\sum_{\mathbf{k}\in\Z^{\rm dim}}{\mathbf{q}^{(j)}[\mathbf{k}]\widetilde\phi(2\cdot-\mathbf{k})}\quad\mbox{and}\quad
\widetilde\psi_{j} = 2^{\rm dim}\sum_{\mathbf{k}\in\Z^{\rm dim}}{\wt
{\mathbf{q}}^{(j)}[\mathbf{k}]\phi(2\cdot-\mathbf{k})}.
\end{equation}
For a sequence $\{\bp[\bk]\}_{\bk}$ of real numbers, we use
$\widehat{\bp}(\gbo)$ to denote its Fourier series: $$\widehat{\bp}(\gbo)= \sum_{\bk \in \Z^{\rm dim}}\bp[\bk] e^{-i\bk\cdot\gbo}.$$

The mixed extension principle (MEP) of \cite{ron1997affineII} provides a general theory
for the construction of MRA-based wavelet bi-frames. Given two sets of finitely supported masks $\{\bp,\bq^{(1)},\ldots,\bq^{(J)}\}$ and
$\{\tilde\bp,\tilde\bq_1,\ldots,\tilde\bq_J\}$, the MEP says that as long as
we have
\begin{equation}\label{MEPCondition}
\widehat{\bp}(\bm{\xi})\overline{\widehat{\widetilde\bp}(\bm{\xi})}
+\sum_{j=1}^{J}\widehat{\bq}^{(j)}
(\bm{\xi})\overline{\widehat{\widetilde\bq}^{(j)}(\bm{\xi})}=1
\quad\text{and}\quad
\widehat{\bp}(\bm{\xi})\overline{\widehat{\widetilde\bp}(\bm{\xi}+\bm{\nu})}+\sum_{j=1}^{J}
\widehat{\bq}^{(j)}(\bm{\xi})\overline{\widehat{\widetilde\bq}^{(j)}(\bm{\xi}+\bm{\nu})}=0,
\end{equation}
for all $\bm{\nu}\in\{0,\pi\}^{\rm dim}\setminus\{{\bf 0}\}$ and
$\bm{\xi}\in[-\pi,\pi]^{\rm dim}$, the quasi-affine systems $X(\Psi)$ and
$X(\widetilde{\Psi})$ with $\Psi$ and $\widetilde{\Psi}$ given by
\eqref{psimaskphi} forms a pair of \textit{bi-frames} in $\LRs$. In
particular, when $\bp=\widetilde\bp$ and $\bq^{(j)}=\widetilde\bq^{(j)}$ for
$j=1,\ldots,J$, the MEP \eqref{MEPCondition} become the following unitary
extension principle (UEP) discovered in \cite{ron1997affine}:
\begin{equation}\label{UEPCondition}
|\widehat{\bp}(\bm{\xi})|^2+\sum_{j=1}^{J}{|\widehat{\bq}^{(j)}(\bm{\xi})|^2}=1 \quad\text{and}\quad
\widehat{\bp}(\bm{\xi})\overline{\widehat{\bp}(\bm{\xi}+\bm{\nu})}+\sum_{j=1}^{J}
\widehat{\bq}^{(j)}(\bm{\xi})\overline{\widehat{\bq}^{(j)}(\bm{\xi}+\bm{\nu})}=0,
\end{equation}
and the system $X(\Psi)$ is a \textit{tight frame} of $\LRs$. Here, $\bp$ and $\wt \bp$ are
lowpass filters and $\bq^{(j)}, \wt \bq^{(j)}$ are highpass filters. These filters generate discrete bi-frame (or tight frame if UEP is satisfied) system for the sequence space $\ell_2(\Z^{\rm dim})$.

Now, we show two simple but useful examples of univariate tight framelets.

\begin{example}\label{example:haar} Let $\bp =
\frac12[1,1]$ be the refinement mask of the piecewise constant B-spline
$B_1(x)=1$ for $x\in[0,1]$ and $0$ otherwise. Define $\bq^{(1)} = \frac12[1,
-1]$. Then ${\bp}$ and ${\bq^{(1)}}$ satisfy both identities of
\eqref{UEPCondition}. Hence, the system $X(\psi_1)$ defined in
\eqref{E:Quasi:Affine} is a tight frame of $\LR$.
\end{example}

\begin{example}\label{example:linear} \cite{ron1997affine}. Let $\bp =
\frac14[1,2,1]$ be the refinement mask of the piecewise linear B-spline
$B_2(x)=\max{(1-|x|,0)}$. Define $\bq^{(1)} = \frac{\sqrt{2}}{4}[1,0,-1]$ and
$\bq^{(2)} = \frac14[-1,2,-1]$. Then ${\bp}$, ${\bq^{(1)}}$ and ${\bq^{(2)}}$ satisfy
both identities of \eqref{UEPCondition}. Hence, the system $X(\Psi)$ where
$\Psi=\{\psi_1,\psi_2\}$ defined in \eqref{E:Quasi:Affine} is a tight frame
of $\LR$.
\end{example}

In the discrete setting, let an image $\mathbf{f}$ be a ${\rm dim}$-dimensional array. We denote the fast $(L+1)$-level wavelet frame transform/decomposition with filters $\{\bq^{(0)}=\bp, \bq^{(1)}, \cdots, \bq^{(J)}\}$  (see, e.g., \cite{Dong2010IASNotes}) as
\begin{equation}\label{D:Wu}
\mathbf{Wu}=\{\mathbf{W}_{j,l}\mathbf{u}: (j,l)\in\mathbb{B}\},
\end{equation}
where $$\mathbb{B}=\{(j,l):\ 1\le j\le J, 0\le l\le L\}\cup\{(0,L)\}.$$ The wavelet frame coefficients of $\mathbf{u}$ are computed by $\mathbf{W}_{j,l}\mathbf{u}=\bq_{j,l}[-\cdot]\circledast \mathbf{u}$, where $\circledast$
denotes the convolution operator with a certain boundary condition, e.g.,
periodic boundary condition, and $\bq_{j,l}$ is defined as
\begin{equation}\label{dilatedfilter}
\bq_{j,l}=\check{\bq}_{j,l}\circledast \check{\bq}_{0,l-1}
\circledast \ldots \circledast \check{\bq}_{0,0}\quad\mbox{with}\quad
\check{\bq}_{j,l}[\mathbf{k}]
=\left\{\begin{array}{rl}\bq^{(j)}[2^{-l}\mathbf{k}],&\mathbf{k}\in2^{l}\Z^{\rm dim};\\
0,&\mathbf{k}\notin2^{l}\Z^{\rm dim}.\end{array}\right.
\end{equation}
Similarly, we can define $\widetilde{\mathbf{W}}\bu$ and
$\widetilde{\mathbf{W}}_{j,l}\mathbf{u}$ given a set of dual filters $\{\tilde\bq^{(0)}=\tilde\bp,\tilde\bq^{(1)},\ldots,\tilde\bq^{(J)}\}$. We denote the inverse wavelet frame
transform (or wavelet frame reconstruction) as $\widetilde{\mathbf{W}}^\top$,
which is the adjoint operator of $\widetilde{\mathbf{W}}$, and by the MEP, we
have the perfect reconstruction formula
\begin{equation*}
\mathbf{u}=\widetilde{\mathbf{W}}^\top \mathbf{Wu}, \quad\mbox{for all }\mathbf{u}.
\end{equation*}
In particular when $\mathbf{W}$ is the transform for a tight frame system, the
UEP gives us
\begin{equation}\label{E:Perfect:Reconstruction}
\mathbf{u}={\mathbf{W}}^\top \mathbf{Wu}, \quad\mbox{for all
}\mathbf{u}.
\end{equation}
In this paper, we will focus our analysis on the case ${\rm dim}=2$, i.e. for 2-dimensional images/functions. Also, we will only consider single-level wavelet frame transforms. For this case, we simply have $$\mathbf W\mathbf u=\{\bq^{(j)}[-\cdot]\circledast \mathbf{u}: 0\le j\le J\}.$$

\subsection{Notation, Assumptions and Simple Facts}

Throughout the rest of this paper, we denote $\Psi=\{\psi_1,\ldots,\psi_J\}$ as the set of framelets, denote $\phi$ as the corresponding refinable function, and denote $\{\bq^{(0)}, \bq^{(1)}, \cdots, \bq^{(J)}\}$ as the associated finitely supported filters. The following refinement equations are satisfied
\begin{equation}\label{E:Refinement}
\phi=4\sum_{\mathbf{k}\in\Z^{2}}\bq^{(0)}[\mathbf{k}]\phi(2\cdot-\mathbf{k})\quad\mbox{and}\quad
\psi_{j} = 4\sum_{\mathbf{k}\in\Z^{2}}\mathbf{q}^{(j)}[\mathbf{k}]\phi(2\cdot-\mathbf{k}),
\end{equation}
for $1\le j\le J$. In this paper, we focus on the tensor-product B-spline tight wavelet frame systems constructed by \cite{ron1997affine}, and $\phi$ is a tensor-product B-spline function. We shall refer to the elements in $\Psi$ as B-spline framelets.

We start with the following basic definition:
\begin{definition}\label{Def:IndexSets} Let $\Omega=(0,1)^2\subset\R^2$ and $n\in\N$. Define
\begin{equation}\label{D:IndexSets}
\begin{array}{ll}
 &\mathbf O_n:=\left\{\bk\in\Z^2 : 2^{-n}\mathbb{Z}^2\cap\overline{\Omega}\right\},\cr
 &\mathbf M_n:=\left\{\bk\in\Z^2 : \emph{supp}(\phi_{n,\bk})\subset\overline{\Omega}\right\},\cr
 &\mathbf K_n:=\left\{\bk\in\mathbf M_n: \bk+C\cdot\emph{supp}(\bq^{(j)})\in\mathbf M_n \mbox{ for all } 0\le j\le J \right\},
\end{array}
\end{equation}
where $$\phi_{n,\bk}=2^n\phi(2^n\cdot-\bk).$$ Note that the index set $\mathbf K_n$ (in particular, the constant $C$) is defined such that the boundary condition of $\left(\bq^{(j)}[-\cdot]\circledast \mathbf{u}\right)[\bk]$ and its high-order analogue(s) (see \eqref{Boundary}) are inactive for $\bk\in\mathbf K_n$.
\end{definition}

Given $\{\phi_{n,\bk}:\bk\in\mathbb{Z}^2\}$, define the associated sampling operator as
\begin{equation}\label{D:Tn}
(\mathbf T_n f)[\bk]:=2^n \left<f,\phi_{n,\bk}\right>_{L_2(\Omega)}\qquad\mbox{for } f\in L_2(\Omega)\ \mbox{ and }\ \bk\in\mathbf K_n.
\end{equation}
In particular, we assume that image $\mathbf u_n\in\R^{|\mathbf K_n|}$ is sampled from its continuum counterpart $u\in L_2(\Omega)$ by $\mathbf u_n=\mathbf T_n u$. Therefore, when the undecimated wavelet frame transforms are applied to $\mathbf u_n$, the underlying quasi-affine system we use is $X^n(\Psi)$ (see \eqref{E:Quasi:Affine} and \eqref{E:Quasi:Affine:1} for the definition of $X^N(\Psi)$). Note that if $X^n(\Psi)$ is used, we have
$$\psi_{j,n-1,\bk}=2^{n-2}\psi_j(2^{n-1}\cdot-\bk/2).$$

We write the standard single-level wavelet transform as $$W_n f[j,\bk] := 2^n\left<f,\psi_{j,n-1,\bk}\right>.$$ By the refinement equation \eqref{E:Refinement}, we have $\psi_{j,n-1,\bk}=\sum_{\mathbf l\in\mathbb{Z}^2}\bq^{(j)}[\mathbf l-\bk]\phi_{n,\mathbf l}$, and hence
\begin{eqnarray*}
W_n f[j,\bk]&=&2^n\left<f,\psi_{j,n-1,\bk}\right>=2^n\left<f,\sum_{\mathbf l\in\mathbb{Z}^2}\bq^{(j)}[\mathbf l-\bk]\phi_{n,\mathbf l}\right> \cr
&=&2^n\sum_{\mathbf l\in\mathbb{Z}^2}\bq^{(j)}[\mathbf l-\bk]\left<f,\phi_{n,\mathbf l}\right> \cr
&=&\left(\bq^{(j)}[-\cdot]\circledast \mathbf T_nf\right)[\bk],\qquad \mbox{for }\bk\in\mathbf K_n.
\end{eqnarray*}

Recall from \cite{CDOS2011} that given B-spline framelets $\psi_j$, there exists a compactly supported function $\varphi_j$ with $\emph{supp}(\varphi_j)=\emph{supp}(\psi_j)$ and $c_j:=\int\varphi_j\ne0$, such that $D_j\varphi_j=\psi_j$, where $D_j$ is the differential operator associated to $\psi_j$. Then, it is not hard to see that
\begin{equation}\label{E:IntegratingPsi}
D_j\varphi_{j,n-1,\bk}=2^{s_j(n-1)}\psi_{j,n-1,\bk}
\end{equation}
with $s_j$ the order of $D_j$. Therefore, the wavelet frame transform $W_nf$ can be regarded as a sampling of the derivatives:
\begin{eqnarray*}
W_n f[j,\bk]&=&2^n\left<f,\psi_{j,n-1,\bk}\right>=2^{n-s_j(n-1)}\left<f,D_j\varphi_{j,n-1,\bk}\right>\cr
&=&(-1)^{s_j}2^{n-s_j(n-1)}\left<D_jf,\varphi_{j,n-1,\bk}\right>,\qquad\mbox{for }\bk\in\mathbf K_n.
\end{eqnarray*}
Thus, we have, for $\bk\in\mathbf K_n$,
\begin{equation}\label{E:Relation:Wn':D':Unweighted}
\left(\bq^{(j)}[-\cdot]\circledast \mathbf T_nf\right)[\bk]=(-1)^{s_j}2^{n-s_j(n-1)}\left<D_jf,\varphi_{j,n-1,\bk}\right>.
\end{equation}

Observe from the general model \eqref{uni}, the variable $\mathbf v$ has the same structure as the wavelet frame coefficients, which makes it a vector-valued array with $J$ components where $J$ is the total number of wavelet frame bands. We start with the following definition of vector-valued function/sequence spaces.
\begin{definition}\label{norm}
Suppose $f$ is a vector-valued function on a (continuum or discrete) domain $\Omega$, i.e. for each $x\in\Omega$, $f(x)$ is specified as a vector in the Euclidean space $(\mathbb R^{J^r},\|\cdot\|_{\ell_q})$ with $r=0,1,2$. Let $W(\Omega)$ be a certain Banach space (such as an $L_p$, a Sobolev space or an $\ell_p$ space). Define
\begin{equation}\label{norm_def}
\|f\|_{W(\Omega;B)}=\|f_B\|_{W(\Omega)},
\end{equation}
where $f_B$ is the (almost everywhere defined) function such that
\begin{equation*}
f_B(x)=\|f(x)\|_B.
\end{equation*}
Note that we may only mention the norm (\emph{e.g.} "$L_p(\Omega;\ell_2)$") or the space (\emph{e.g.} "$W_s^p(\Omega;\mathbb R^J)$") whenever there is no confusion.
\end{definition}

For vector-valued function $v\in L_2\left(\Omega;\mathbb R^J\right)$, $v=(v_1,\ldots,v_J)$, we can define the sampling operator $\mathbf S_n: L_2\left(\Omega;\R^{J}\right)\to\R^{J|\mathbf K_n|}$ as follows
\begin{equation}\label{D:Sn}
\left(\mathbf S_nv\right)[j;\bk]=2^n\left<v_j,c_j^{-1}\varphi_{j,n-1,\bk}\right>,\quad \bk\in\mathbf K_n, 1\le j\le J,
\end{equation}
where $\varphi_{j,n-1,\bk}=2^{n-2}\varphi_j(2^{n-1}\cdot-\bk/2)$. In particular, we assume that image $\mathbf v_n\in\R^{J|\mathbf K_n|}$ is sampled from its continuum counterpart $v\in L_2\left(\Omega;\R^{J}\right)$ by $\mathbf v_n=\mathbf S_n v$. One can verify that there exists a compactly supported function $\varphi_{ij, n-2,\bk}$, with
\begin{equation}\label{phi:ij}
\varphi_{ij, n-2,\bk}=2^{n-4}\varphi_{ij}(2^{n-2}\cdot-\bk/4)
\end{equation}
such that
\begin{equation*}
\left(\bq^{(i)}[-\cdot]\circledast c_j^{-1}\varphi_{j,n-1,\cdot}\right)[\bk]=2^{-\left(n-2\right)s_i}D_i\varphi_{ij, n-2,\bk},
\end{equation*}
where $s_i$ is the order of $D_i$.

The existence of the above $\varphi_{ij,n-2,\bk}$ is due to the vanishing moment of $\bq^{(i)}$. Let $\mathbf s_i=(s_{1i}, s_{2i})$ be the vanishing moment of $\bq^{(i)}$. Observe that
\begin{equation*}
\left(\widehat{\left(\bq^{(i)}[-\cdot]\circledast c_j^{-1}\varphi_{j,0,\cdot}\right)}[\mathbf 0]\right)(\bm\xi)=\widehat{\bq}^{(i)}(\bm\xi)\cdot\ \widehat{\left(c_j^{-1}\varphi_{j,0,\mathbf 0}\right)}(\bm\xi)
\end{equation*}
At the right-hand-side
\begin{equation*}
\widehat{\bq}^{(i)}(\bm\xi)\propto(i\bm\xi)^{\mathbf{s}_i}
\end{equation*}
near $\bm\xi=\mathbf 0$, and
\begin{equation*}
\widehat{\left(c_j^{-1}\varphi_{j,0,\mathbf 0}\right)}(\mathbf 0)=1
\end{equation*}
Define $\varphi_{ij, -1,\mathbf 0}$ by
\begin{equation*}
\widehat{\varphi}_{ij, -1,\mathbf 0}:=(i\xi)^{-\mathbf{s}_i}\widehat{\bq}^{(i)}(\bm\xi)\cdot\ \widehat{\left(c_j^{-1}\varphi_{j,0,\mathbf 0}\right)}(\bm\xi)
\end{equation*}
then
\begin{equation*}
c_{ij}:=\int\varphi_{ij, -1,\mathbf 0}<\mathcal 1
\end{equation*}
According to our convention, the integral remains the same when the dilation level $-1$ is replaced by other values. By Schwartz-Paley-Wiener theorem, we have
\begin{equation*}
\left|\widehat{\bq}^{(i)}(\bm\xi)\cdot\ \widehat{\left(c_j^{-1}\varphi_{j,0,\mathbf 0}\right)}(\bm\xi)\right|\leq C_1\cdot (1+|\bm\xi|)^{C_2} e^{C^{(i)}\cdot \emph{\emph{supp}}(\varphi_{j,0,\mathbf0})|\emph{\emph{Im}}\bm\xi|},
\end{equation*}
for $\bm\xi\in\mathbb{C}^2$. Therefore,
\begin{equation*}
|\widehat{\varphi}_{ij,0,\bm{0}}(\bm\xi)|\leq C'_1\cdot (1+|\bm\xi|)^{C_2-s_i} e^{C^{(i)}\cdot \emph{\emph{supp}}(\varphi_{j,0,\mathbf0})|\emph{\emph{Im}}\bm\xi|},
\end{equation*}
for $\bm\xi\in\mathbb{C}^2$. Consequently
\begin{equation}\label{Boundary}
\emph{\emph{supp}}(\varphi_{ij, -1,\mathbf 0})\subset C\cdot \emph{\emph{supp}}(\varphi_{j,0,\mathbf0}).
\end{equation}

Finally we shall vary $n$ and $\bk$ and get the whole set of functions according to equation (\ref{phi:ij}). It is worth clarifying that the constant $C$ in definition (\ref{Def:IndexSets}) can be taken as $C:=\max_{i}C^{(i)}$.

%


\begin{definition}\label{Def:Wn':Wn''} Define the weighted discrete wavelet transforms $\mathbf W_n':\mathbb{R}^{|\mathbf K_n|}\to\mathbb{R}^{J|\mathbf K_n|}$ and $\mathbf W_n'':\mathbb{R}^{J|\mathbf K_n|}\to\mathbb{R}^{J^2|\mathbf K_n|}$ respectively as:
\begin{eqnarray}\label{W_n}
(\mathbf W'_n\mathbf u_n)[j;\bk]&=&\lambda_{j}'\left(\bq^{(j)}[-\cdot]\circledast\mathbf u_n\right)[\bk],\quad 1\le j\le J;\cr
(\mathbf W_n''\mathbf t_n)[i,j;\bk]&=&\lambda_{ij}''\left(\bq^{(i)}[-\cdot]\circledast\mathbf v_n[j;\cdot]\right)[\bk],\quad 1\le i,j\le J.
\end{eqnarray}
In \eqref{W_n}, the weights $\lambda'_j$ and $\lambda''_{i,j}$ are chosen as $$\lambda_{j}'=c_{j}^{-1}(-1)^{s_j}2^{(n-1)s_j}\quad\mbox{and}\quad\lambda_{ij}''=c_{ij}^{-1}(-1)^{s_{i}}2^{(n-2)s_{i}}.$$ As we will see from below that the values $\{s_j\}_{1\le j\le J}\subset\N$ are the orders of the (partial) differential operators in $\mathbf D'$ and $\mathbf D''$.
\end{definition}

Let $\mathbf D'$ and $\mathbf D''$ be given by Definition \ref{Def:D':D''} with $s_j:=|D_j|$. By \eqref{E:Relation:Wn':D':Unweighted}, we have
\begin{equation}\label{E:Relation:Wn':D'}
(\mathbf W_n'\mathbf T_nu)[j;\bk]=2^{n}\left<D_ju,c_j^{-1}\varphi_{j,n-1,\bk}\right>,\quad\mbox{for } u\in L_2(\Omega).
\end{equation}
Observe that
\begin{eqnarray}\label{W_nS_nv}
\left(\bq^{(i)}[-\cdot]\circledast2^n\left<v_j,c_j^{-1}\varphi_{j,n-1,\cdot}\right>\right)[\bk]
&=&2^n\left<v_j,\left(\bq^{(i)}[-\cdot]\circledast c_j^{-1}\varphi_{j,n-1,\cdot}\right)[\bk]\right>\cr
&=&2^n\left<v_j,2^{-\left(n-1\right)s_i}D_i\varphi_{ij, n-2,\bk}\right>\cr
&=&(-1)^{s_i}2^{n-\left(n-1\right)s_i}\left<D_iv_j,\varphi_{ij, n-2,\bk}\right>
\end{eqnarray}
Therefore,
\begin{equation}\label{E:Relation:Wn'':D''}
(\mathbf W_n''\mathbf S_nv)[i,j;\bk]=2^n\left<D_iv_j,c_{ij}^{-1}\varphi_{ij, n-2,\bk}\right>,\quad v\in L_2(\Omega;\R^J).
\end{equation}
Furthermore,
\begin{eqnarray}\label{E:SnDu=Wn'Tnu}
\left(\mathbf S_n\mathbf D'u\right)[j;\bk]&=&2^n\left<D_ju,c_j^{-1}\varphi_{j,n-1,\bk}\right>\cr
&=&2^n\lambda_j'\left<u,\psi_{j,n-1,\bk}\right>\cr
&=&(\mathbf W_n'\mathbf T_nu)[j;\bk].
\end{eqnarray}

\section{Asymptotic Analysis of Model \eqref{main}}\label{S:Theory}

Recall the definition of the objective function $F_n$ of \eqref{main}:
\begin{eqnarray}\label{F_n_def}
F_n(\mathbf{u}_n,\mathbf{v}_n)=\nu_1\|\mathbf W'_n\mathbf{u}_n-\mathbf{v}_n\|^p_{\ell_p(\mathbf{K}_n;\ell_2(\mathbb R^J))}+\nu_2\|\mathbf W''_n\mathbf{v}_n\|^q_{\ell_q(\mathbf{K}_n;\ell_2(\mathbb R^{J^2}))}+\frac{1}{2}\|\mathbf A_n\mathbf{u}_n-\mathbf{f}_n\|_2^2,
\end{eqnarray}
with $1\le p,q\le 2$. Here, the wavelet frame transforms $\mathbf W_n'$ and $\mathbf W_n''$ are given in Definition \ref{Def:Wn':Wn''}. Operator $\mathbf A_n$ is a discretization of its continuum counterpart $A: L_2(\Omega)\mapsto L_2(\Omega)$ satisfying the following condition:
\begin{equation}\label{Assumption:An:Tn}
\lim_{n\to\infty}\|\mathbf{T}_n Au-\mathbf{A}_n\mathbf{T}_n u\|_2=0\quad\mbox{for all }
u\in L_2(\Omega),
\end{equation}
Note that operator $A$ that corresponds to image denoising, deblurring and inpainting indeed satisfies the above assumption \cite{CDOS2011}.

To study the asymptotic behaviour of the variation model \eqref{main} thoroughly, we first rewrite the objective function $F_n$ to a new one that is defined on a function space instead of a finite dimensional Euclidean space. We regard $\mathbf f_n$, $\mathbf{u}_n$ and $\mathbf{v}_n$ as samples of their continuum counterparts $f\in L_2(\Omega)$ $u\in W_{2s}^p(\Omega)$ and $v\in W_{s}^q(\Omega;\R^J)$, i.e. $\mathbf{f}_n=\mathbf T_nf$, $\mathbf{u}_n=\mathbf T_nu$ and $\mathbf{v}_n=\mathbf S_nv$. Then, we define
\begin{eqnarray}\label{E_n_def}
E_n(u,v)&:=&\nu_1\|\mathbf W'_n\mathbf T_nu-\mathbf S_nv\|^p_{\ell_p(\mathbf{K}_n;\ell_2(\mathbb R^J))}+\nu_2\|\mathbf W''_n\mathbf S_nv\|^q_{\ell_q(\mathbf{K}_n;\ell_2(\mathbb R^{J^2}))}+\frac{1}{2}\|\mathbf A_n\mathbf T_nu-\mathbf T_nf\|_2^2\cr
&=:&\tilde{V}_n(u,v)+\frac{1}{2}\|\mathbf A_n\mathbf T_nu-\mathbf T_nf\|_2^2
\end{eqnarray}

The following proposition ensures that the original problem $\inf_{\mathbf u_n,\mathbf v_n} F_n(\mathbf u_n, \mathbf v_n)$ is equivalent to the new problem $\inf_{u,v} E_n(u,v)$.

\begin{proposition}\label{E_nF_n} For any given $\mathbf{u}_n$, $\mathbf{v}_n$, there exists $u\in W_{2s}^p(\Omega)$ and $v\in W_{s}^q(\Omega;\mathbb{R}^J)$ such that $E_n(u,v)=F_n(\mathbf{u}_n,\mathbf{v}_n)$. Conversely, for any $u\in W_{2s}^p(\Omega)$ and $v\in W_{s}^q(\Omega;\mathbb{R}^J)$, there exist $\mathbf{u}_n$ and $\mathbf{v}_n$ such that $E_n(u,v)=F_n(\mathbf{u}_n,\mathbf{v}_n)$. In particular, we have $$\inf_{\mathbf u_n\in\R^{|\mathbf K_n|},\mathbf v_n\in\R^{J|\mathbf K_n|}} F_n(\mathbf u_n,\mathbf v_n)=\inf_{u\in W_{2s}^p(\Omega),v\in W_{s}^q(\Omega;\R^J)} E_n(u,v).$$
\end{proposition}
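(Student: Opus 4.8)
The plan is to establish the equality $\inf_{\mathbf u_n,\mathbf v_n} F_n = \inf_{u,v} E_n$ by proving the two pointwise correspondence statements and then taking infima. Since $E_n(u,v) = F_n(\mathbf T_n u, \mathbf S_n v)$ by definition, the forward direction (from $(u,v)$ to $(\mathbf u_n,\mathbf v_n)$) is immediate: given any $u \in W_{2s}^p(\Omega)$ and $v \in W_s^q(\Omega;\R^J)$, simply set $\mathbf u_n := \mathbf T_n u$ and $\mathbf v_n := \mathbf S_n v$, and then $F_n(\mathbf u_n, \mathbf v_n) = E_n(u,v)$ by the very definition of $E_n$. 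This already gives $\inf_{\mathbf u_n,\mathbf v_n} F_n \le \inf_{u,v} E_n$.

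The real content is the converse: given arbitrary finite-dimensional arrays $\mathbf u_n \in \R^{|\mathbf K_n|}$ and $\mathbf v_n \in \R^{J|\mathbf K_n|}$, I need to produce functions $u \in W_{2s}^p(\Omega)$ and $v \in W_s^q(\Omega;\R^J)$ with $\mathbf T_n u = \mathbf u_n$ and $\mathbf S_n v = \mathbf v_n$ — i.e., the sampling operators $\mathbf T_n$ and $\mathbf S_n$ are surjective onto the respective Euclidean spaces, and moreover surjective already from the smaller Sobolev domains $W_{2s}^p(\Omega)$ and $W_s^q(\Omega;\R^J)$. For $\mathbf T_n$: recall $(\mathbf T_n u)[\bk] = 2^n\langle u, \phi_{n,\bk}\rangle$ where $\phi$ is the tensor-product B-spline refinable function. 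The natural candidate is a finite linear combination $u = \sum_{\mathbf l} c_{\mathbf l}\, \phi_{n,\mathbf l}$ (a spline, hence as smooth as the B-spline order allows, certainly in $W_{2s}^p(\Omega)$ for splines of high enough order, which one is free to choose); the coefficients $c_{\mathbf l}$ solving $\mathbf T_n u = \mathbf u_n$ exist because the Gram-type matrix $\big(2^n\langle \phi_{n,\bk},\phi_{n,\mathbf l}\rangle\big)_{\bk,\mathbf l}$ is invertible — this is the standard fact that the shifts of a B-spline are linearly independent (equivalently, $\widehat\phi$ has no $2\pi\Z^2$-periodic common zero), restricted appropriately near the boundary on the index set $\mathbf K_n$. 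For $\mathbf S_n$: componentwise, $(\mathbf S_n v)[j;\bk] = 2^n\langle v_j, c_j^{-1}\varphi_{j,n-1,\bk}\rangle$, and since each $\varphi_j$ is a fixed compactly supported function with $\int\varphi_j = c_j \ne 0$ and the same support as the B-spline framelet $\psi_j$, the same linear-independence-of-shifts argument applies: pick $v_j$ as a finite combination of the $\varphi_{j,n-1,\bk}$ (these are smooth enough to sit in $W_s^q$) and invert the corresponding Gram matrix. Putting the two together yields, for any $\mathbf u_n,\mathbf v_n$, a pair $(u,v)$ with $E_n(u,v) = F_n(\mathbf u_n,\mathbf v_n)$, hence $\inf_{u,v} E_n \le \inf_{\mathbf u_n,\mathbf v_n} F_n$; combined with the forward inequality this gives the claimed equality.

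The step I expect to be the main obstacle — or at least the one requiring the most care — is verifying the invertibility of the relevant (local, boundary-adapted) Gram matrices on the index set $\mathbf K_n$, i.e., that $\mathbf T_n$ and $\mathbf S_n$ remain surjective even after the domain is truncated to $\Omega = (0,1)^2$ and attention is restricted to $\mathbf K_n$. Away from the boundary this is classical shift-invariance of B-splines and their associated $\varphi_j$; near the boundary one must check that the relevant collection of translates restricted to $\bar\Omega$ is still linearly independent, which is where the precise construction of $\mathbf M_n$ and $\mathbf K_n$ in Definition \ref{Def:IndexSets} (with its support/buffer constant $C$) does its job — the buffer guarantees enough interior basis functions are available. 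A secondary technicality is confirming that spline combinations lie in $W_{2s}^p(\Omega)$ and $\varphi_j$-combinations in $W_s^q(\Omega;\R^J)$; this is purely a matter of choosing the underlying B-spline order (and hence the smoothness of $\phi$ and of the derived $\varphi_j$) large enough, which one is always free to do. The final "in particular" statement then follows formally by taking infima over the two matched families.
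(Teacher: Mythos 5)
The forward direction of your argument (given $(u,v)$, take $\mathbf u_n=\mathbf T_n u$, $\mathbf v_n=\mathbf S_n v$) is exactly the paper's and is fine, as is the reduction of the converse to surjectivity of $\mathbf T_n$ and $\mathbf S_n$ onto $\R^{|\mathbf K_n|}$ and $\R^{J|\mathbf K_n|}$ from the Sobolev domains. The genuine gap is in how you realize that surjectivity: you build $u$ as a finite combination $u=\sum_{\mathbf l}c_{\mathbf l}\,\phi_{n,\mathbf l}$ of the primal refinable functions (and $v_j$ as a combination of the $\varphi_{j,n-1,\bk}$), and then need $u\in W_{2s}^p(\Omega)$, $v\in W_s^q(\Omega;\R^J)$. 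But $\phi$ is the refinable function of the \emph{given} system defining $F_n$ and $E_n$; its order is not a free parameter at this point, and for the systems the paper actually works with it is too rough. For instance, with the Haar or piecewise-linear B-spline framelets one has $s\ge 1$, hence $2s\ge 2$, while $B_1\notin W_1^p$ and $B_2\notin W_2^p$ (its second distributional derivative is a measure), so your candidate $u$ simply does not lie in the admissible class over which the infimum of $E_n$ is taken; the analogous problem can occur for the $\varphi_j$'s and $W_s^q$. Your closing remark that one is ``always free'' to raise the B-spline order conflates the freedom of choosing the system in Theorem \ref{main_1} with the present situation, where the transforms $\mathbf W_n',\mathbf W_n''$ (hence $\phi$, $\psi_j$, $\varphi_j$) are already fixed and tied to the orders of $\mathbf D'$.

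The paper circumvents precisely this obstruction: Lemma \ref{onto} constructs a \emph{biorthogonal dual} system $\{\tilde\phi_{n,\bk}\}$, $\{\tilde\varphi_{j,n-1,\bk}\}$ with \emph{any prescribed smoothness} (using finite linear independence of the compactly supported generators, an orthogonal-complement argument, and mollification by a smooth $\sigma$), and then sets $u=2^{-n}\sum_{\bk}\mathbf u_n[\bk]\tilde\phi_{n,\bk}$, $v_j=2^{-n}\sum_{\bk}\mathbf v_n[j;\bk]\tilde\varphi_{j,n-1,\bk}$, so that $\mathbf T_n u=\mathbf u_n$, $\mathbf S_n v=\mathbf v_n$ while the required Sobolev regularity holds by construction, independently of the smoothness of the frame system. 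Your Gram-matrix inversion is a legitimate alternative way to get biorthogonality-type coefficients (the linear-independence input is the same as the paper's), but to close the argument you must expand in functions that are smooth enough --- e.g.\ mollified or otherwise smooth duals of $\phi_{n,\bk}$ and $\varphi_{j,n-1,\bk}$ --- rather than in $\phi_{n,\mathbf l}$ and $\varphi_{j,n-1,\bk}$ themselves.
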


To prove Proposition \ref{E_nF_n}, the following lemma is needed.
\begin{lemma}\label{onto} Given the sets of functions $\{\phi_{n,\bk}:\bk\in\mathbf{K}_n\}$ and $\{\varphi_{j,n-1,\bk}: \bk\in\mathbf{K}_n\}$ for any $1\le j\le J$, there exist dual functions $\{\tilde{\phi}_{n,\bk}:\bk\in\mathbf{K}_n\}$ and $\{\tilde{\varphi}_{j,n-1,\bk}:\bk\in\mathbf{K}_n\}$, with any prescribed smoothness, whose translations satisfy the relations $$\left<\tilde{\phi}_{n,\bk'}, \phi_{n,\bk}\right>=\delta_{\bk'\bk}\quad\mbox{and}\quad\left<\tilde{\varphi}_{j,n-1,\bk'}, \varphi_{j,n-1,\bk}\right>=\delta_{\bk'\bk}.$$
\end{lemma}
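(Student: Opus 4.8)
The plan is to construct the dual functions explicitly by exploiting that $\{\phi_{n,\bk}:\bk\in\mathbf K_n\}$ and $\{\varphi_{j,n-1,\bk}:\bk\in\mathbf K_n\}$ are each \emph{finite} collections of translates of a single compactly supported function (the B-spline $\phi$, resp.\ the auxiliary function $\varphi_j$), hence linearly independent (B-splines and their integer translates form a Riesz basis, and $\varphi_j$ inherits this since $\widehat\varphi_j$ does not vanish on a set that would destroy stability; more simply, for the finite index set $\mathbf K_n$ linear independence of shifts of a compactly supported function with nonzero integral is elementary). First I would reduce to the model problem: for a fixed compactly supported $g\in L_2(\R^2)$ whose integer translates $\{g(\cdot-\bk):\bk\in\Z^2\}$ are linearly independent, and for a finite index set $\Lambda$, produce functions $\tilde g_{\bk}$, $\bk\in\Lambda$, with prescribed smoothness such that $\langle \tilde g_{\bk'}, g(\cdot-\bk)\rangle=\delta_{\bk'\bk}$ for $\bk,\bk'\in\Lambda$. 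Then I would rescale and translate according to $\phi_{n,\bk}=2^n\phi(2^n\cdot-\bk)$ and $\varphi_{j,n-1,\bk}=2^{n-2}\varphi_j(2^{n-1}\cdot-\bk/2)$ to obtain the two required families; the $n$-dependence only introduces harmless dilation factors.

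For the model problem, the key step is a finite-dimensional duality/Gram-matrix argument. Fix a candidate generator $h$ with the desired (arbitrarily high, or $C^\infty$) smoothness and compact support, and with $\int h\ne 0$ — for instance a smooth bump, or a high-order B-spline, or a mollified version of $g$. Consider the finite Gram-type matrix $G=\big(\langle h(\cdot-\bk'), g(\cdot-\bk)\rangle\big)_{\bk',\bk\in\Lambda}$. If $G$ is invertible, set $\tilde g_{\bk'}=\sum_{\bm\in\Lambda}(G^{-1})_{\bk'\bm}\,h(\cdot-\bm)$, which is a finite linear combination of smooth compactly supported functions, hence has the prescribed smoothness, and satisfies the biorthogonality relations by construction. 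So the crux is to verify that $h$ can be chosen so that $G$ is invertible. One clean way: take $h$ to be a dual generator of $g$ in the bi-frame/biorthogonal sense on the full lattice, so that $\langle h(\cdot-\bk'),g(\cdot-\bk)\rangle=\delta_{\bk'\bk}$ for \emph{all} $\bk,\bk'\in\Z^2$, which makes $G$ the identity; such a dual exists because the (periodized) symbol of $g$ is a trigonometric polynomial that does not vanish identically (linear independence of shifts $\Rightarrow$ the bracket function $\sum_{\mathbf j}|\widehat g(\cdot+2\pi\mathbf j)|^2$ is bounded below, so $g$ has a canonical dual), and one can further convolve with a smoothing kernel of integral one supported near the origin to boost smoothness while preserving the biorthogonality on $\Z^2$ up to an invertible finite perturbation — or simply invoke that a compactly supported dual with arbitrary prescribed smoothness exists for refinable/B-spline-type generators, as is standard in the MRA literature cited in Section~\ref{SubS:wavelet:frames}.

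The main obstacle is precisely the simultaneous requirement of \emph{compact support}, \emph{arbitrary prescribed smoothness}, and \emph{exact biorthogonality} for the dual generator. Global duals obtained from the inverse symbol $1/\widehat g$ are smooth but not compactly supported; compactly supported duals exist (e.g.\ by the Bezout-type construction for B-splines) but a priori have limited smoothness tied to the support size. The resolution I would adopt is to decouple the two needs using the finiteness of $\Lambda$: I do \emph{not} need biorthogonality on all of $\Z^2$, only on the finite block $\Lambda$. So I would take any compactly supported $h$ of the desired smoothness with $\widehat h(\mathbf 0)\ne 0$ and argue that the finite matrix $G=(\langle h(\cdot-\bk'),g(\cdot-\bk)\rangle)_{\bk',\bk\in\Lambda}$ is generically invertible — concretely, by scaling $h$ to $h_\varepsilon=\varepsilon^{-2}h(\cdot/\varepsilon)$ one has $\langle h_\varepsilon(\cdot-\bk'),g(\cdot-\bk)\rangle\to \widehat h(\mathbf 0)\,g(\bk-\bk')$... which is not quite a Kronecker delta; better, translate and dilate so that $h_\varepsilon$ concentrates at a point where $g$ is continuous and the values $\{g(\bk-\bk'+t_0)\}$ form an invertible (generalized Vandermonde / interpolation) matrix. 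Since $g$ is a B-spline, positivity of $g$ on its support and the total positivity of B-splines make such an invertible sampling matrix available (Schoenberg–Whitney theorem), closing the argument. Once $G$ is invertible for some admissible $h$, the explicit formula above finishes the proof, and Proposition~\ref{E_nF_n} then follows by expanding $u,v$ in these dual bases.
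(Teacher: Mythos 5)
Your overall strategy -- biorthogonalize on the finite index set by taking smooth, compactly supported building blocks $h(\cdot-\bm)$ and inverting the finite matrix $G=\bigl(\langle h(\cdot-\bk'),g(\cdot-\bk)\rangle\bigr)_{\bk',\bk}$ -- is viable and close in spirit to what the lemma needs, and your observation that finitely many integer shifts of a nonzero compactly supported function are automatically linearly independent (via analyticity of $\widehat g$) is exactly the ingredient the paper also uses. However, the crux of your argument, the invertibility of $G$ for a suitable $h$, is not closed. Your concentration argument reduces $G$ (after rescaling) to the sampling matrix $\bigl((\int h)\,g(\bk'+t_0-\bk)\bigr)_{\bk',\bk}$, and you justify its nonsingularity by ``$g$ is a B-spline, positivity, total positivity, Schoenberg--Whitney.'' That covers the family $\{\phi_{n,\bk}\}$, but that half of the lemma is already quoted from \cite[Proposition 3.1]{CDOS2011}; the genuinely new content is the family $\{\varphi_{j,n-1,\bk}\}$, and the auxiliary functions $\varphi_j$ (defined through $D_j\varphi_j=\psi_j$, i.e.\ antiderivative-type compactly supported piecewise polynomials, in 2D typically tensor mixtures) are in general \emph{not} B-splines: they need not be nonnegative, need not be totally positive, and may even be discontinuous along lattice lines. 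Moreover, because your building blocks are constrained to a single lattice offset $\bk'+t_0$, the limiting matrix is a finite Toeplitz-type matrix of samples of $\varphi_j$, and the existence of an offset $t_0$ making it nonsingular is not established by anything you cite; your other fallback (``a compactly supported dual with arbitrary smoothness exists, as is standard in the MRA literature'') likewise applies to refinable generators such as $\phi$, not to $\varphi_j$.

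The gap is fixable within your framework: drop the lattice constraint and place the concentrated bumps at \emph{arbitrary} points $\mathbf t_{\bk'}$; since the finitely many functions $\{\varphi_{j,n-1,\bk}(\cdot)\}_{\bk\in\mathbf K_n}$ are linearly independent piecewise polynomials, one can choose collocation points (avoiding the break lines) at which the evaluation matrix is nonsingular, and then a perturbation/determinant-continuity argument gives invertibility of $G$ for small $\varepsilon$. The paper avoids the matrix-inversion issue altogether: it mollifies the system ($\varphi^\sigma=\varphi*\sigma$), notes the mollified shifts are still finitely linearly independent (again by analyticity of Fourier transforms of compactly supported functions), picks for each $\bk_0$ an $L_2$ function $f_0$ orthogonal to all mollified shifts except the $\bk_0$-th, normalizes, and convolves with $\sigma$ once more so that the biorthogonality transfers to the original system while the dual acquires the prescribed smoothness. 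That route needs no positivity, no Toeplitz/collocation nonsingularity, and no compact support of the duals (which the lemma does not require anyway).
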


\begin{proof} The proof of existence of dual for $\{\phi_{n,\bk}:\bk\in\mathbf{K}_n\}$ was given by \cite[Proposition 3.1]{CDOS2011}. Therefore, we focus on the proof of $\{\varphi_{j,n-1,\bk}: \bk\in\mathbf{K}_n\}$ for any $1\le j\le J$.

Since $\varphi_{j,n-1,0}$ has a compact support, $\hat\varphi_{j,n-1,0}$ is analytic, and thus has only isolated zeros. Consequently, for any $\hat{\mathbf a}\in L_2([-\pi, \pi]^2)$, $\hat{\mathbf a}(\bm{\xi})\hat{\varphi}_{j,n-1,0}(\bm{\xi})=0$ implies $\hat{\mathbf a}(\bm\xi)=0$. In particular, given any sequence $\mathbf a\in \ell_0(\Z^2)$, $\sum_{\bk\in\mathbb{Z}^2}\mathbf a[\bk]\varphi_{j,n-1,\bk}=0$ implies all coefficients $\mathbf a[\bk]=0$, i.e. the given system is finite linear independent. Let $\sigma$ be some compactly supported function with a certain given smoothness, and $f^\sigma=f*\sigma$. Note that $\{\varphi^\sigma_{j,n-1,\bk}:\bk\in\mathbf{K}_n\}$ is a linearly independent set since $\hat{\varphi^\sigma}_{j,n-1,\bk}$ is analytic. Consequently, $\varphi^\sigma_{j,n-1,\bk_0}\not\in span\{\varphi^\sigma_{j,n-1,\bk}:\bk\in\mathbf{K}_n-\{\bk_0\}\}$, and there exists a function $f_0\in (span\{\varphi^\sigma_{j,n-1,\bk}:\bk\in\mathbf{K}_n-\{\bk_0\}\})^\perp \setminus(\varphi^\sigma_{j,n-1,\bk_0})^\perp\not=\emptyset$, where the total space is $L_2(\mathbb{R}^2)$. Define $\tilde{\varphi}_{j,n-1,\bk_0}=\left(\left<f_0,\varphi^\sigma_{j,n-1,\bk_0}\right>^{-1}f_0\right)^\sigma$, and apply the same process to the other $\bk\in\mathbf K_n$. We obtain the desired result.
\end{proof}

\begin{proof}$\mathbf{(of\ Proposition\ \ref{E_nF_n})}$ For one direction, given $\mathbf{u}_n$, $\mathbf{v}_n$, define $u=2^{-n}\sum_{\bk\in\mathbb{Z}^2}\mathbf{u}_n[\bk]\tilde{\phi}_{n,\bk}$, $v_j=2^{-n}\sum_{\bk\in\mathbb{Z}^2}\mathbf{v}_n[j;\bk]\tilde{\varphi}_{j,n-1,\bk}$, then
\begin{eqnarray*}
(\mathbf T_nu)[\bk]&=&\sum_{\bk'\in\mathbb{Z}^2}\mathbf{u}_n[\bk']\left<\tilde{\phi}_{n,\bk'},\phi_{n,\bk}\right>=\mathbf{u}_n[\bk]\cr
(\mathbf S_nv)[j;\bk]&=&\sum_{\bk'\in\mathbb{Z}^2}\mathbf{v}_n[j;\bk']\left<\tilde{\varphi}_{j,n-1,\bk'},\varphi_{j,n-1,\bk}\right>=\mathbf{v}_n[j;\bk]
\end{eqnarray*}
consequently, $E_n(u,v)=F_n(\mathbf{u}_n,\mathbf{v}_n)$. Conversely, given $u$ and $v$, define $\mathbf{u}_n=\mathbf T_nu$ and $\mathbf{v}_n=\mathbf S_nv$ then obviously, $F_n(\mathbf{u}_n,\mathbf{v}_n)=E_n(u,v)$.
\end{proof}

Consider the variational problem $$\inf_{u\in W_{2s}^p(\Omega),v\in W_{s}^q(\Omega;\R^J)}E(u,v),$$ where
\begin{eqnarray}\label{D:Energy:Variational}
E(u,v)=\nu_1\left\|D'u-v\right\|^p_{L_p(\Omega;\ell_2(\mathbb R^J))}+\nu_2\left\|D''v\right\|^q_{L_q(\Omega;\ell_2(\mathbb R^{J^2}))}+\frac{1}{2}\left\|Au-f\right\|_{L_2(\Omega)}^2.
\end{eqnarray}
Here, the differential operators $\mathbf D'$ and $\mathbf D''$ are defined in Definition \ref{Def:D':D''} with $|\mathbf D'|=|\mathbf D''|=s$. Our main objective of this section is to show the relation between $E_n$ and $E$, and how the solutions of $\inf E_n$ approximates that of $\inf E$. For this, we will use $\Gamma$-convergence \cite{dal1993introduction} as the main tool.

\begin{definition}\label{D:GammaConvergence}
The sequence of functionals $\{E_n\}$ defined on a Banach space $B$ is said to be $\Gamma$-convergent to the functional $E$ if:
\begin{enumerate}
\item $\ \lim\inf_{n\to\mathcal{1}}E_n(u_n)\geq E(u)$  for arbitrary $u_n\stackrel{B}{\longrightarrow}u$;

\item For arbitrary $u\in B$, there exists $u'_n\stackrel{B}{\longrightarrow}u$ such that: $\lim\sup_{n\to\mathcal{1}}E_n(u'_n)\leq E(u)$.
\end{enumerate}
\end{definition}

To show that $E_n$ given by \eqref{E_n_def} indeed $\Gamma$-converges to the functional $E$ given by \eqref{D:Energy:Variational}, we will use the following two lemmas, which show that $E_n$ converges to $E$ pointwise and $\{E_n\}$ is equicontinuous. The proof of the two lemmas will be postponed to the later part of this section.

\begin{lemma}\label{Lemma:Pointwise}
$E_n$ converges to $E$ pointwise, that is, for $(u,v)\in W_{2s}^p(\Omega)\times W_s^q(\Omega;\mathbb{R}^J)$,
\begin{equation*}
\lim_{n\to\mathcal{1}}E_n(u,v)=E(u,v)
\end{equation*}
\end{lemma}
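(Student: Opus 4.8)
The plan is to fix $(u,v)\in W_{2s}^p(\Omega)\times W_s^q(\Omega;\mathbb R^J)$ and analyze each of the three terms of $E_n(u,v)$ separately, showing that each converges to the corresponding term of $E(u,v)$. The third term is the easiest: by assumption \eqref{Assumption:An:Tn} we have $\|\mathbf A_n\mathbf T_nu-\mathbf T_nf\|_2\to\|\mathbf T_n Au-\mathbf T_nf\|_2+o(1)$, and $\|\mathbf T_n(Au-f)\|_2^2=2^{-2n}\sum_{\bk\in\mathbf K_n}|2^n\langle Au-f,\phi_{n,\bk}\rangle|^2$ is a Riemann-type sum that converges to $\|Au-f\|_{L_2(\Omega)}^2$; this is a standard fact (proved in \cite{CDOS2011}) using that $\phi$ is a partition of unity, $\int\phi=1$, and $Au-f\in L_2(\Omega)$. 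The main work is therefore the first two terms.

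For the first term, I would use the key identities \eqref{E:Relation:Wn':D'} and the definition \eqref{D:Sn} of $\mathbf S_n$: for $\bk\in\mathbf K_n$,
\begin{equation*}
(\mathbf W_n'\mathbf T_nu-\mathbf S_nv)[j;\bk]=2^n\left\langle D_j'u-v_j,\ c_j^{-1}\varphi_{j,n-1,\bk}\right\rangle.
\end{equation*}
Writing $g_j:=D_j'u-v_j$, which lies in $L_p(\Omega)$ since $u\in W_{2s}^p$ and $v\in W_s^q\subset W_s^p$ (note $q\ge 1$ but we must be a little careful about which $L_p$ the difference lies in — here $s\ge|\mathbf D'|$ guarantees $D_j'u\in W_s^p\subset L_p$, and $v_j\in L_q$; if $p\le q$ then $L_q(\Omega)\subset L_p(\Omega)$ on the bounded domain $\Omega$, so $g_j\in L_p$), the first term of $E_n$ becomes
\begin{equation*}
\nu_1\left(2^{-2n}\sum_{\bk\in\mathbf K_n}\Big(\sum_{j=1}^J\big|2^n\langle g_j,c_j^{-1}\varphi_{j,n-1,\bk}\rangle\big|^2\Big)^{p/2}\right).
\end{equation*}
The claim is that this converges to $\nu_1\|\mathbf D'u-v\|_{L_p(\Omega;\ell_2)}^p=\nu_1\int_\Omega(\sum_j|g_j(\mathbf x)|^2)^{p/2}\,d\mathbf x$. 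Since $c_j^{-1}\int\varphi_j=1$, the quantity $2^n\langle g_j,c_j^{-1}\varphi_{j,n-1,\bk}\rangle$ is a local average of $g_j$ against a rescaled bump of unit mass concentrated near $2^{-(n-1)}(\bk/2)$; the corresponding partition-of-unity / approximate-identity argument shows this is close to $g_j$ in $L_p$, and then continuity of the map $(g_1,\dots,g_J)\mapsto\int(\sum_j|g_j|^2)^{p/2}$ on $L_p(\Omega)^J$ (by the $p/2$-th power being dominated and convergence in $L_p$ passing to the Riemann sum) gives the result. The analogue for the second term is identical, using \eqref{E:Relation:Wn'':D''}: $(\mathbf W_n''\mathbf S_nv)[i,j;\bk]=2^n\langle D_i'v_j,c_{ij}^{-1}\varphi_{ij,n-2,\bk}\rangle$, with $D_i'v_j\in L_q$ because $v_j\in W_s^q$ and $s\ge|\mathbf D''|$, so the second term converges to $\nu_2\int_\Omega(\sum_{i,j}|D_i'v_j(\mathbf x)|^2)^{q/2}\,d\mathbf x=\nu_2\|\mathbf D''v\|_{L_q(\Omega;\ell_2)}^q$.

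The step I expect to be the main obstacle is making rigorous the passage ``discrete weighted $\ell_p$-$\ell_2$ sum of local averages $\to$ $L_p$-$\ell_2$ integral'' uniformly in the coupling between the inner $\ell_2$-sum over bands and the outer $\ell_p$-sum over $\bk$ — i.e. justifying that one may first replace each $2^n\langle g_j,c_j^{-1}\varphi_{j,n-1,\bk}\rangle$ by a genuine pixel-average of $g_j$ (controlling the error in $\ell_p(\mathbf K_n)$ by an $L_p$-modulus-of-continuity bound, using density of smooth functions and the compact support of $\varphi_j$), and then invoke convergence of Riemann sums for the continuous integrand $(\sum_j|g_j|^2)^{p/2}$. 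The boundary index set $\mathbf K_n$ versus $\mathbf O_n$ contributes only a vanishing-measure discrepancy and is handled as in \cite{CDOS2011}. Care is also needed that $g_j$ and $D_i'v_j$ genuinely lie in the claimed $L_p$, $L_q$ spaces, which is exactly why the hypothesis requires $(u,v)\in W_{2s}^p\times W_s^q$ with $s=|\mathbf D'|=|\mathbf D''|$; I would state this membership at the outset and then reduce everything to the scalar approximate-identity lemma already used in the earlier papers.
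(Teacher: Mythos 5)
Your proposal follows essentially the same route as the paper's proof: rewrite $\mathbf W_n'\mathbf T_nu-\mathbf S_nv=\mathbf S_n(\mathbf D'u-v)$ and $(\mathbf W_n''\mathbf S_nv)[i,j;\bk]=2^n\langle D_i'v_j,c_{ij}^{-1}\varphi_{ij,n-2,\bk}\rangle$ as samplings against the unit-mass functions $\varphi$, reduce component-by-component to the scalar approximation lemma of \cite{CDOS2011}, dispose of the $\mathbf O_n\setminus\mathbf K_n$ boundary strip by its vanishing measure, and handle the fidelity term via \eqref{Assumption:An:Tn} exactly as in \cite{CDOS2011}. The only loose end is your ``$p\le q$'' caveat for $D_j'u-v_j\in L_p(\Omega)$: on the bounded planar domain this holds for all $1\le p,q\le 2$ by the Sobolev embedding $W_s^q(\Omega)\hookrightarrow L_p(\Omega)$ (the same embedding the paper invokes in the equicontinuity argument), so no restriction relating $p$ and $q$ is needed.
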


\begin{lemma}\label{Lemma:Equicontinuous}
$\{E_n\}$ forms an equicontinuous family in the sense that, for any given function $(u,v)\in W_{2s}^p(\Omega)\times W_s^q(\Omega;\mathbb R^J)$, and $\varepsilon>0$, there exists an $\eta>0$ independent from $n$ such that $\left|E_n(u',v')-E_n(u,v)\right|<\varepsilon$ holds for any $(u',v')$ satisfying $\|u'-u\|_{W_{2s}^p(\Omega)}+\|v'-v\|_{W_s^q(\Omega;\mathbb R^J)}<\eta$.
\end{lemma}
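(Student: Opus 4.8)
\textbf{Proof proposal for Lemma \ref{Lemma:Equicontinuous}.}

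The plan is to reduce the equicontinuity of $\{E_n\}$ to the (uniform in $n$) boundedness of the linear operators appearing inside each term of $E_n$, together with elementary estimates for the maps $t\mapsto t^p$ and $t\mapsto t^q$. First I would split $E_n(u',v')-E_n(u,v)$ into the differences of its three summands and treat each separately. For the fidelity term $\frac12\|\mathbf A_n\mathbf T_nu'-\mathbf T_nf\|_2^2$, I would use that $\big|\,\|a'\|^2-\|a\|^2\,\big|\le \|a'-a\|\,(\|a'-a\|+2\|a\|)$ and that $\mathbf A_n\mathbf T_n(u'-u)$ is controlled by $\|u'-u\|_{L_2(\Omega)}$ with a constant independent of $n$; this uniform bound is exactly the content of assumption \eqref{Assumption:An:Tn} combined with the uniform $L_2$-boundedness of $\mathbf T_n$ (which follows from the frame/Bessel property of $\{\phi_{n,\bk}\}$ recalled in Section \ref{S:Preliminaries}). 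Since $\|u'-u\|_{L_2(\Omega)}\le\|u'-u\|_{W_{2s}^p(\Omega)}$ (up to a constant on the bounded domain $\Omega$), this term is handled once $\eta$ is small relative to $\|u\|$.

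Next I would handle the two framelet terms $\nu_1\|\mathbf W'_n\mathbf T_nu'-\mathbf S_nv'\|^p_{\ell_p(\mathbf K_n;\ell_2)}$ and $\nu_2\|\mathbf W''_n\mathbf S_nv'\|^q_{\ell_q(\mathbf K_n;\ell_2)}$. The key identities are \eqref{E:Relation:Wn':D'} and \eqref{E:Relation:Wn'':D''}, which realize $\mathbf W'_n\mathbf T_n$ and $\mathbf W''_n\mathbf S_n$ as samplings of the derivatives $\mathbf D'u$ and $\mathbf D''v$ against the integrable, compactly supported, smooth functions $c_j^{-1}\varphi_{j,n-1,\bk}$ and $c_{ij}^{-1}\varphi_{ij,n-2,\bk}$. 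Because these sampling functions are $L^\infty$-normalized translates/dilates with fixed (dilation-adjusted) supports, a standard Bessel-type estimate shows that the maps $g\mapsto\big(2^n\langle g,c_j^{-1}\varphi_{j,n-1,\cdot}\rangle\big)_{\bk\in\mathbf K_n}$ are bounded from $L_p(\Omega)$ (resp.\ $L_q$) into $\ell_p(\mathbf K_n)$ (resp.\ $\ell_q$) \emph{uniformly in $n$}; the weighted normalization in Definition \ref{Def:Wn':Wn''} is precisely what makes the constants $n$-independent. Combining this with the operator bound $\|\mathbf D'\|_{W_{2s}^p\to W_s^p}$, $\|\mathbf D''\|_{W_s^q\to W_{s-|\mathbf D''|}^q}$ and with $\|\mathbf S_n\|$ uniformly bounded, I get $\|\mathbf W'_n\mathbf T_n(u'-u)-\mathbf S_n(v'-v)\|_{\ell_p(\mathbf K_n;\ell_2)}\le C\big(\|u'-u\|_{W_{2s}^p}+\|v'-v\|_{W_s^q}\big)$ with $C$ independent of $n$, and similarly for the second term. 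Then the scalar inequalities $\big|\,|x'|^p-|x|^p\,\big|\le p\,|x'-x|\,(|x'-x|+|x|)^{p-1}$ for $1\le p\le 2$ (and likewise for $q$), applied after reverse triangle inequality to the $\ell_p$/$\ell_q$ norms, convert the $\eta$-smallness of the arguments into $\varepsilon$-smallness of these two summands, with the dependence on $\|u\|,\|v\|$ absorbed into the choice of $\eta$.

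The main obstacle I anticipate is establishing the uniform-in-$n$ boundedness of the sampling operators $\mathbf T_n,\mathbf S_n$ and of the composite maps in the framelet terms, in the correct Sobolev-to-sequence topologies rather than merely $L_2\to\ell_2$. The subtlety is bookkeeping the dilation factors: each $\varphi_{j,n-1,\bk}$ carries a $2^{n-2}$ normalization and the weights $\lambda'_j=c_j^{-1}(-1)^{s_j}2^{(n-1)s_j}$ grow with $n$, so one must check that these exactly cancel the $2^{-s_j(n-1)}$ from $D_j\varphi_{j,n-1,\bk}=2^{s_j(n-1)}\psi_{j,n-1,\bk}$ (equation \eqref{E:IntegratingPsi}) and the $2^{-2n}$ weight in the $\ell_p(\mathbf K_n)$ norm, leaving an $n$-free constant. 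Once the identities \eqref{E:Relation:Wn':D'}--\eqref{E:SnDu=Wn'Tnu} are in hand this is a finite-overlap/Bessel argument using compact supports of the $\varphi$'s, but writing the constants carefully so they visibly do not depend on $n$ is where the real work lies. I would isolate this as the quantitative estimate $\|\mathbf S_n g\|_{\ell_r(\mathbf K_n;\ell_2)}\le C\|g\|_{L_r(\Omega;\ell_2)}$, $1\le r\le 2$, prove it once for the model sampling functional, and then apply it mechanically to both framelet terms; everything else is the two convexity inequalities above and the triangle inequality.
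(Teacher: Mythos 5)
Your proposal is essentially correct, but it takes a genuinely different route at the one step where the real difficulty lies: the uniformity in $n$ of the operator bounds. The paper does \emph{not} prove $n$-independent constants directly. Its direct computation for $\mathbf S_n$ (via an $L_1$--$L_\infty$ H\"older estimate on each $\varphi_{j,n-1,\bk}$) yields constants $C(n)$, $C'(n)$ that do depend on $n$; the uniform bounds \eqref{equi_part_1}--\eqref{equi_part_2} are then obtained by combining this per-$n$ boundedness with the pointwise convergence already proved in Lemma \ref{Lemma:Pointwise} (which gives $\sup_n\|\mathbf W''_n\mathbf S_n v\|<\infty$ and $\sup_n\|\mathbf S_n(\mathbf D'u-v)\|<\infty$ for each fixed $(u,v)$) and invoking the resonance theorem (uniform boundedness principle), after which the same scalar inequality $|x^q-y^q|\le q(y+|x-y|)^{q-1}|x-y|$ that you use finishes the argument; the fidelity term is disposed of by citing \cite[Proposition 3.2]{CDOS2011} rather than re-deriving a uniform bound on $\mathbf A_n\mathbf T_n$. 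Your plan instead proves the uniform sampling estimate $\|\mathbf S_n g\|_{\ell_r(\mathbf K_n;\ell_2)}\le C\|g\|_{L_r(\Omega;\ell_2)}$ directly by a finite-overlap/quasi-interpolation argument, exploiting that $2^n c_j^{-1}\varphi_{j,n-1,\bk}$ has unit mass; this works, but note it requires the sharper Jensen-type local averaging estimate rather than the crude $L_1\times L_\infty$ H\"older bound (the latter is exactly what fails to be $n$-uniform for $p>1$), so the bookkeeping you flag is not optional. What each approach buys: yours is constructive and self-contained, giving explicit $n$-free constants without appealing to Banach--Steinhaus or to Lemma \ref{Lemma:Pointwise}; the paper's is shorter and avoids delicate constant-tracking, at the price of non-explicit constants and a logical dependence of the equicontinuity lemma on the pointwise-convergence lemma. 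One soft spot in your sketch: for the fidelity term, assumption \eqref{Assumption:An:Tn} is only a pointwise consistency statement, so a uniform bound on $\mathbf A_n\mathbf T_n$ does not follow from it ``exactly'' --- you would again need either a uniform-boundedness argument ($\|\mathbf A_n\mathbf T_n u\|_2\le\|\mathbf T_nAu\|_2+\|\mathbf T_nAu-\mathbf A_n\mathbf T_nu\|_2$ plus Banach--Steinhaus) or simply the citation the paper uses.
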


\begin{theorem}\label{main_1}
Let $E_n$ be given by \eqref{E_n_def} and $E$ by \eqref{D:Energy:Variational}. Then, for every sequence $(u_n, v_n) \to (u,v)$ in $W_{2s}^p(\Omega)\times W_s^q(\Omega;\mathbb R^J)$, we have $\lim_{n\to+\infty}E_n(u_n,v_n)=E(u,v)$. Consequently, $E_n$ $\Gamma$-converges to $E$ in $W_{2s}^p(\Omega)\times W_s^q(\Omega;\mathbb R^J)$.
\end{theorem}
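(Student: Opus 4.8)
\textbf{Proof proposal for Theorem \ref{main_1}.}
The plan is to deduce the strong-convergence statement---and hence $\Gamma$-convergence---from the two auxiliary lemmas (Lemma \ref{Lemma:Pointwise} and Lemma \ref{Lemma:Equicontinuous}) by a standard ``pointwise convergence plus equicontinuity implies uniform-type convergence'' argument. Concretely, fix a sequence $(u_n,v_n)\to(u,v)$ in $W_{2s}^p(\Omega)\times W_s^q(\Omega;\mathbb R^J)$ and let $\varepsilon>0$. First I would invoke Lemma \ref{Lemma:Equicontinuous} at the limit point $(u,v)$ to obtain an $\eta>0$, independent of $n$, such that $\|u'-u\|_{W_{2s}^p(\Omega)}+\|v'-v\|_{W_s^q(\Omega;\mathbb R^J)}<\eta$ forces $|E_n(u',v')-E_n(u,v)|<\varepsilon/2$ for every $n$. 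Since $(u_n,v_n)\to(u,v)$ in the product norm, there is $N_1$ with $\|u_n-u\|_{W_{2s}^p(\Omega)}+\|v_n-v\|_{W_s^q(\Omega;\mathbb R^J)}<\eta$ for $n\ge N_1$, so $|E_n(u_n,v_n)-E_n(u,v)|<\varepsilon/2$ for all such $n$. Next I would apply Lemma \ref{Lemma:Pointwise} at the fixed point $(u,v)$ to get $N_2$ with $|E_n(u,v)-E(u,v)|<\varepsilon/2$ for $n\ge N_2$. Combining the two estimates via the triangle inequality yields $|E_n(u_n,v_n)-E(u,v)|<\varepsilon$ for $n\ge\max(N_1,N_2)$, which is exactly $\lim_{n\to\infty}E_n(u_n,v_n)=E(u,v)$.

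The second assertion, that $E_n$ $\Gamma$-converges to $E$, then follows immediately by checking the two conditions of Definition \ref{D:GammaConvergence}. For the liminf inequality, given any $(u_n,v_n)\to(u,v)$ we have just shown the full limit exists and equals $E(u,v)$, so in particular $\liminf_{n\to\infty}E_n(u_n,v_n)=E(u,v)\ge E(u,v)$. For the recovery-sequence (limsup) condition, given $(u,v)$ one simply takes the constant sequence $(u_n,v_n)=(u,v)$, which trivially converges to $(u,v)$ in the product topology; then Lemma \ref{Lemma:Pointwise} gives $\limsup_{n\to\infty}E_n(u,v)=E(u,v)\le E(u,v)$. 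Both conditions hold, so the $\Gamma$-convergence claim is established.

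I do not expect any real obstacle in this particular proof, since all the analytic content has been isolated into the two lemmas; the argument here is purely the soft ``$\varepsilon/2+\varepsilon/2$'' combination together with an unwinding of the definition of $\Gamma$-convergence. If there is a subtle point to watch, it is making sure the equicontinuity in Lemma \ref{Lemma:Equicontinuous} is genuinely uniform in $n$ (which it is, by hypothesis) so that the single $\eta$ can be chosen before passing to the limit in $n$; this is precisely what upgrades pointwise convergence to convergence along arbitrary convergent sequences. The genuine difficulties---controlling the discrete wavelet-frame norms by their continuum counterparts using the sampling identities \eqref{E:Relation:Wn':D'}, \eqref{E:Relation:Wn'':D''}, and \eqref{E:SnDu=Wn'Tnu}, and handling the fidelity term through assumption \eqref{Assumption:An:Tn}---reside in the proofs of Lemmas \ref{Lemma:Pointwise} and \ref{Lemma:Equicontinuous}, which are deferred to Sections \ref{SubS:Lemma:PW} and \ref{SubS:Lemma:EqCont}.
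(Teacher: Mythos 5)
Your proposal is correct and follows essentially the same route as the paper: the paper likewise combines the pointwise convergence of Lemma \ref{Lemma:Pointwise} at the limit point $(u,v)$ with the $n$-uniform equicontinuity of Lemma \ref{Lemma:Equicontinuous} via the triangle inequality $|E_n(u_n,v_n)-E(u,v)|\le |E_n(u,v)-E(u,v)|+|E_n(u,v)-E_n(u_n,v_n)|$ to get the full limit, and then reads off both conditions of Definition \ref{D:GammaConvergence} (with the constant sequence as recovery sequence). No gaps; your remark that the uniformity of $\eta$ in $n$ is the crucial point is exactly the role the paper assigns to Lemma \ref{Lemma:Equicontinuous}.
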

\begin{proof}
The proof of Theorem \ref{main_1} is essentially the same as \cite[Theorem 3.2]{CDOS2011} once we have Lemma \ref{Lemma:Pointwise} and Lemma \ref{Lemma:Equicontinuous}. However, for completeness, we include the proof here.

By Lemma \ref{Lemma:Pointwise} and Lemma \ref{Lemma:Equicontinuous}, we have that, for an arbitrary given $(u, v)\in W_{2s}^p(\Omega)\times W_s^q(\Omega;\mathbb R^J)$, and $\epsilon>0$,
\begin{enumerate}
\item[(a)] $\lim_{n\to+\infty}|E_n(u,v)-E(u,v)|=0$;
\item[(b)]  there exist an integer $\mathcal{N}$ and $\eta>0$
    satisfying $|E_n(u',v')-E_n(u,v)|<\epsilon$ whenever
    $\|u'-u\|_{W_{2s}^p(\Omega)}+\|v'-v\|_{W_s^q(\Omega;\mathbb R^J)}<\eta$ and $n>\mathcal{N}$.
\end{enumerate}
Note that for arbitrary $(u_n,v_n)\in W_{2s}^p(\Omega)\times W_s^q(\Omega;\mathbb R^J)$, we have
$$
|E_n(u_n,v_n)-E(u,v)|\leq |E_n(u,v)-E(u,v)|+|E_n(u,v)-E_n(u_n,v_n)|.
$$
Let the sequence $(u_n,v_n)\to (u,v)$ in $W_{2s}^p(\Omega)\times W_s^q(\Omega;\mathbb R^J)$, and let $\epsilon>0$ be a
given arbitrary number. On one hand, by (a), there exists an $\mathcal{N}_1$ such that $|E_n(u,v)-E(u,v)|<\epsilon/2$ whenever $n>\mathcal{N}_1$. On the other hand, by (b), there exist $\mathcal{N}$ and $\eta$ such that $|E_n(u',v')-E_n(u,v)|<\epsilon/2$ whenever
$\|u'-u\|_{W_{2s}^p(\Omega)}+\|v'-v\|_{W_s^q(\Omega;\mathbb R^J)}<\eta$ and $n>\mathcal{N}$. Since $(u_n,v_n)\to (u,v)$, there exists $\mathcal{N}_2$ such that $\|u-u_n\|_{W_{2s}^p(\Omega)}+\|v-v_n\|_{W_s^q(\Omega;\mathbb R^J)}<\eta$ whenever $n>\mathcal{N}_2$. Letting $u'=u_n$ and $v'=v_n$ leads to $|E_n(u_n,v_n)-E_n(u,v)|<\epsilon/2$ whenever $n>\max\{\mathcal{N},\mathcal{N}_2\}$. Therefore, we have
$$
|E_n(u_n,v_n)-E(u,v)|\leq \epsilon
$$
whenever $n>\max\{\mathcal{N},\mathcal{N}_1,\mathcal{N}_2\}$. This shows that $\lim_{n\to+\infty}E_n(u_n,v_n)=E(u,v)$, and hence both
conditions given in Definition \ref{D:GammaConvergence} are satisfied. Therefore, $E_n$ $\Gamma$-converges to $E$.
\end{proof}

Recall that $(u^\star,v^\star)$ is an $\epsilon$-optimal solution of the problem $\inf_{u,v} E(u,v)$ if
\begin{equation*}
E(u^\star,v^\star)\le\inf_{u,v} E(u,v)+\epsilon, \quad\mbox{for some }\epsilon>0.
\end{equation*}
In particular, $0$-optimal solutions of $E_n$ or $E$ will be called minimizers. By Theorem \ref{main_1}, we have the following result describing the relation between the $\epsilon$-optimal solutions of $E_n$ and that of $E$.

\begin{corollary}\label{cluster_point}
Let $(u_n^\star,v_n^\star)$ be an $\epsilon$-optimal solution of $E_n$ for a given $\epsilon>0$ and for all $n$.
If the set $\{(u_n^\star,v_n^\star) : n\}$ has a cluster point $(u^\star, v^\star)$, then $(u^\star, v^\star)$ is an $\epsilon$-optimal solution to $E$. In particular, when $(u_n^\star, v_n^\star)$ is a minimizer of $E_n$ and $(u^\star, v^\star)$ a cluster point of the set $\{(u_n^\star,v_n^\star) : n\}$, then $(u^\star,v^\star)$ is a minimizer of $E$.
\end{corollary}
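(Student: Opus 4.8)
The plan is to leverage Theorem \ref{main_1} together with a standard $\Gamma$-convergence argument; the statement is the classical fact that $\Gamma$-convergence passes (almost-)minimizers to (almost-)minimizers of the limit, adapted to the $\epsilon$-optimal setting. Let $(u^\star,v^\star)$ be a cluster point of $\{(u_n^\star,v_n^\star):n\}$, so there is a subsequence $(u_{n_k}^\star,v_{n_k}^\star)\to(u^\star,v^\star)$ in $W_{2s}^p(\Omega)\times W_s^q(\Omega;\mathbb R^J)$. The heart of the argument is a two-sided estimate: a liminf inequality coming from the convergence $E_{n_k}(u_{n_k}^\star,v_{n_k}^\star)\to E(u^\star,v^\star)$ (which is exactly the content of Theorem \ref{main_1}), and a limsup-type control on $\inf E_{n_k}$ obtained via the recovery-sequence property.

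First I would fix an arbitrary competitor $(u,v)\in W_{2s}^p(\Omega)\times W_s^q(\Omega;\mathbb R^J)$ and use part (2) of Definition \ref{D:GammaConvergence} (the ``$\limsup$'' / recovery-sequence part of $\Gamma$-convergence, guaranteed by Theorem \ref{main_1}) to pick $(u_n',v_n')\to(u,v)$ with $\limsup_{n\to\infty}E_n(u_n',v_n')\le E(u,v)$. Since $(u_n^\star,v_n^\star)$ is $\epsilon$-optimal for $E_n$, we have $E_n(u_n^\star,v_n^\star)\le E_n(u_n',v_n')+\epsilon$ for every $n$, hence along the subsequence
\begin{equation*}
\limsup_{k\to\infty}E_{n_k}(u_{n_k}^\star,v_{n_k}^\star)\le \limsup_{k\to\infty}E_{n_k}(u_{n_k}',v_{n_k}')+\epsilon\le E(u,v)+\epsilon.
\end{equation*}
On the other hand, by Theorem \ref{main_1} applied to the convergent sequence $(u_{n_k}^\star,v_{n_k}^\star)\to(u^\star,v^\star)$, we get $\lim_{k\to\infty}E_{n_k}(u_{n_k}^\star,v_{n_k}^\star)=E(u^\star,v^\star)$. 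Combining the two displays yields $E(u^\star,v^\star)\le E(u,v)+\epsilon$, and since $(u,v)$ was arbitrary, $E(u^\star,v^\star)\le \inf_{u,v}E(u,v)+\epsilon$, i.e. $(u^\star,v^\star)$ is an $\epsilon$-optimal solution of $E$. The minimizer case is simply $\epsilon=0$.

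The proof is almost entirely formal once Theorem \ref{main_1} is in hand, so the main ``obstacle'' is really just a matter of bookkeeping: making sure the recovery sequence $(u_n',v_n')$ is indexed along the full sequence (so it may be evaluated along the subsequence $n_k$), and noting that the infimum over competitors $(u,v)$ may legitimately equal $-\infty$ a priori — but here $E\ge 0$ by construction, so $\inf E$ is finite and the argument is safe. One should also remark that $\epsilon$ is the same for all $n$ by hypothesis, which is what lets the $+\epsilon$ term survive cleanly in the limit; if one allowed $\epsilon_n\to 0$ one would instead conclude $(u^\star,v^\star)$ is a genuine minimizer, but that refinement is not needed for the stated corollary.
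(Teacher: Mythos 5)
Your argument is correct and is exactly the standard derivation the paper intends: the corollary is stated as a direct consequence of Theorem \ref{main_1} (no separate proof is written), and your liminf/limsup bookkeeping with the recovery sequence and the $\epsilon$-optimality bound is the canonical way to fill it in. One small simplification worth noting: since Theorem \ref{main_1} actually gives continuous convergence ($E_n(u_n,v_n)\to E(u,v)$ for \emph{every} convergent sequence, in particular for the constant sequence $(u,v)$), you could take $(u_n',v_n')=(u,v)$ and skip the recovery-sequence selection entirely.
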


The rest of this section is dedicated to the proof of Lemma \ref{Lemma:Pointwise} and Lemma \ref{Lemma:Equicontinuous}.

%

\subsection{Proof of Lemma \ref{Lemma:Pointwise}}\label{SubS:Lemma:PW}

The pointwise convergence of the third term of $E_n$ to that of $E$ has been shown by \cite{CDOS2011} under assumption \eqref{Assumption:An:Tn}. Therefore, we focus on the convergence of the first two terms.
Let us first show that $$\|\mathbf W''_n\mathbf S_nv\|_{\ell_q(\mathbf{K}_n;\ell_2(\mathbb R^{J^2}))}\to\|\mathbf D'' v\|_{L_q(\Omega;\ell_2(\mathbb R^{J^2}))},$$ which will imply $$\|\mathbf W''_n\mathbf S_nv\|^q_{\ell_q(\mathbf{K}_n;\ell_2(\mathbb R^{J^2}))}\to\|\mathbf D'' v\|^q_{L_q(\Omega;\ell_2(\mathbb R^{J^2}))}.$$ Recall from Definition \ref{Def:D':D''} that, given $\mathbf D'$, we have $\mathbf D''=(\mathbf D', \ldots, \mathbf D')$. Then,
\begin{eqnarray*}
&&\left|\|\mathbf D''v\|_{L_q(\Omega;\ell_2(\mathbb R^{J^2}))}-\|\mathbf W''_n\mathbf S_nv\|_{\ell_q(\mathbf{K}_n;\ell_2(\mathbb R^{J^2}))}\right|\cr
&=&\left|\left(\int_\Omega\left(\sum_{i,j=1}^J\left|D'_iv_j\right|^2\right)^{q/2}{\rm d}\mathbf x\right)^{1/q}-\left(2^{-2n}\sum_{\bk\in\mathbf{K}_n}\left(\sum_{i,j=1}^J\left|(\mathbf W''_n\mathbf S_nv)[i,j;\bk]\right|^2\right)^{q/2}\right)^{1/q}\right|\cr
(\mbox{By \eqref{E:Relation:Wn'':D''}})&=&\Bigg|\left(\sum_{\bk\in\mathbf{O}_n}\int_{I_\bk}\left(\sum_{i,j=1}^J\left|D'_iv_j\right|^2\right)^{q/2}{\rm d}\mathbf x\right)^{1/q}\cr
& &\hspace*{1in}-\left(2^{-2n}\sum_{\bk\in\mathbf{K}_n}\left(\sum_{i,j=1}^J\left|\left<D'_iv_j,c_{ij}^{-1}\varphi_{ij,n-2,\bk}\right>\right|^2\right)^{q/2}\right)^{1/q}\Bigg|\cr
&=&\Bigg|\left(\sum_{\bk\in\mathbf{O}_n}\int_{I_\bk}\left(\sum_{i,j=1}^J\left|D'_iv_j\right|^2\right)^{q/2}{\rm d}\mathbf x\right)^{1/q}\cr
& &\hspace*{1in}-\left(\sum_{\bk\in\mathbf{K}_n}\int_{I_\bk}\left(\sum_{i,j=1}^J\left|\sum_{\bk'\in\mathbf{K}_n}\left<D'_iv_j,c_{ij}^{-1}\varphi_{ij,n-2,\bk'}\right>\chi_{I_\bk}\right|^2\right)^{q/2}{\rm d}\mathbf x\right)^{1/q}\Bigg|\cr
&\leq&\left(\sum_{\bk\in\mathbf{K}_n}\int_{I_\bk}\left(\sum_{i,j=1}^J\left|D'_iv_j-\sum_{\bk'\in\mathbf{K}_n}\left<D'_iv_j,c_{ij}^{-1}\varphi_{ij,n-2,\bk'}\right>\chi_{I_\bk}\right|^2\right)^{q/2}{\rm d}\mathbf x\right)^{1/q}\cr&&+\left(\int_{\cup_{\bk\in\mathbf{O}_n\setminus\mathbf{K}_n}I_\bk}\left(\sum_{i,j=1}^J\left|D'_iv_j\right|^2\right)^{q/2}{\rm d}\mathbf x\right)^{1/q}\cr
&\leq&\sum_{i,j=1}^J\left(\left\|D'_iv_j-\sum_{\bk\in\mathbf{K}_n}\left<D'_i v_j,c^{-1}_{ij}\varphi_{ij,n-2,\bk}\right>\chi_{I_\bk}\right\|_{L_q(\Omega)}+\|D'_iv_j\|_{L_q(\cup_{\bk\in\mathbf{O}_n\setminus\mathbf{K}_n}I_\bk)}\right).
\end{eqnarray*}
Here, $I_{\bk}$ is the rectangular domain $[\frac{k_1}{2^n},\frac{k_1+1}{2^n}]\times[\frac{k_2}{2^n},\frac{k_2+1}{2^n}]$ where $\bk=(k_1,k_2)$.
By the approximation lemma \cite[Lemma 4.1]{CDOS2011}, we have, for each $i,j$,
\begin{equation*}
\lim_{n\to\mathcal{1}}\left\|D'_iv_j-\sum_{\bk\in\mathbf{O}_n}\left<D'_iv_j,c_{ij}^{-1}\varphi_{ij,n-2,\bk}\right>\chi_{I_\bk}\right\|_{L_q(\Omega)}=0.
\end{equation*}
Also, the Lebesgue measure $\mathfrak{L}$ of the set $\cup_{\bk\in\mathbf{O}_n\setminus\mathbf{K}_n}I_\bk$ satisfies
\begin{equation*}\mathfrak{L}(\cup_{\bk\in\mathbf{O}_n\setminus\mathbf{K}_n}I_\bk)\leq 4\cdot 2^n\left(\frac{\text{diam}(\text{supp}(\phi_{n,0}))}{2^{-n}}+1\right)\cdot(2^{-n})^2=4c\cdot 2^{-n}.
\end{equation*}
Thus, we have
\begin{equation*}
\lim_{n\to\mathcal{1}}\|D'_iv_j\|_{L_q(\cup_{\bk\in\mathbf{O}_n-\mathbf{K}_n}I_\bk)}=0,
\end{equation*}
since $v\in W_s^q(\Omega;\R^J)$ which implies that $D'_iv_j \in L_q(\Omega)$ (for each $(i,j)\in\{1,\cdots,J\}^2$). Altogether, we have $$\|\mathbf W''_n\mathbf S_nv\|^q_{\ell_q(\mathbf{K}_n;\ell_2(\mathbb R^{J^2}))}\to\|\mathbf D'' v\|^q_{L_q(\Omega;\ell_2(\mathbb R^{J^2}))}.$$

Recall \eqref{E:SnDu=Wn'Tnu} that $\mathbf W'_n\mathbf T_nu=\mathbf S_nDu$. Then, following a similar proof as above by replacing the previous summing index $i,j$ with merely $j$, we have:
\begin{eqnarray*}
&&\left|\left\|\mathbf D'u-v\right\|_{L_p(\Omega;\ell_2(\R^J))}-\|\mathbf W'_n\mathbf T_nu-\mathbf S_nv\|_{\ell_p(\mathbf{K}_n;\ell_2(\R^J))}\right|\cr
&=&\left|\left\|\mathbf D'u-v\right\|_{L_p(\Omega;\ell_2(\R^J))}-\|\mathbf S_n\left(\mathbf D'u-v\right)\|_{\ell_p(\mathbf{K}_n;\ell_2(\R^J))}\right|\cr
&\leq&\sum_{j=1}^J\left\|(Du-v)_j-\sum_{\bk\in\mathbf{O}_n}\left<(Du-v)_j,c_j^{-1}\varphi_{j,n-1,\bk}\right>\chi_{I_\bk}\right\|_{L_p(\Omega)}\cr&&+\sum_{j=1}^J\|D'_ju-v_j\|_{L_p(\cup_{\bk\in\mathbf{O}_n\setminus\mathbf{K}_n}I_\bk)}\cr
&\to&0.
\end{eqnarray*}
Therefore, we have $$\|\mathbf W'_n\mathbf T_nu-\mathbf S_nv\|_{\ell_p(\mathbf{K}_n;\ell_2(\R^J))}^p\to\left\|\mathbf D'u-v\right\|_{L_p(\Omega;\ell_2(\R^J))}^p,$$ which concludes the proof of the lemma.

\subsection{Proof of Lemma \ref{Lemma:Equicontinuous}}\label{SubS:Lemma:EqCont}

We shall focus on the equicontinuity of $$\tilde{V}_n=\nu_1\|\mathbf W'_n\mathbf T_nu-\mathbf S_nv\|^p_{\ell_p(\mathbf{K}_n;\ell_2(\mathbb R^J))}+\nu_2\|\mathbf W''_n\mathbf S_nv\|^q_{\ell_q(\mathbf{K}_n;\ell_2(\mathbb R^{J^2}))},$$ since the equicontinuity of $\frac{1}{2}\|\mathbf A_n\mathbf T_nu-\mathbf T_nf\|_2^2$ has been established in \cite[Proposition 3.2]{CDOS2011}.

Let us begin with the bound of the linear operator $S_n:L_p(\Omega;\mathbb R^J)\to\mathbb{R}^{JK_n}$. We denote $\Lambda_{\bk}:=\emph{supp}(\varphi_{j,n-1,\bk})$. Note that when B-spline framelets are used, $\varphi_{j,n-1,\bk}$ has the same support for different $j$. Now, consider
\begin{eqnarray}
\left\|\mathbf S_nv\right\|_{\ell_p(\mathbf{K}_n;\ell_2(\mathbb R^J))}&=&\left(2^{-2n}\sum_{\bk\in\mathbf{K}_n} \left(\sum_{j=1}^{J} |2^n\left<v_j,c_j^{-1}\varphi_{j,n-1,\bk}\right>|^2\right)^{p/2}\right)^{1/p}\cr
&\leq&(2^{-n/p})^2\left(\sum_{\bk\in\mathbf{K}_n}\left(\sum_{j=1}^J \left|2^n\left<v_j,c_j^{-1}\varphi_{j,n-1,\bk}\right>\right|\right)^{p}\right)^{1/p}\cr
&\leq&(2^{-n/p})^2\cdot2^n\left(\sum_{\bk\in\mathbf{K}_n}\left(\sum_{j=1}^J\left\|v_j\right\|_{L_1(\Lambda_{\bk})}\left\|c_j^{-1}\varphi_{j,n-1,\bk}\right\|_{L_\infty(\Omega)}\right)^p\right)^{1/p}\cr
&=&(2^{-n/p})^2\cdot2^{2(n-1)}\left(\max_{j} \left\|c_j^{-1}\varphi_{j}\right\|_{L_\infty(\Omega)}\right)\left(\sum_{\bk\in\mathbf{K}_n}\left(\sum_{j=1}^J\left\|v_j\right\|_{L_1(\Lambda_\bk)}\right)^p\right)^{1/p}\cr
&\leq&(2^{-n/p})^2\cdot2^{2(n-1)}\left(\max_{j} \left\|c_j^{-1}\varphi_{j}\right\|_{L_\infty(\Omega)}\right)\sum_{j=1}^J\left(\sum_{\bk\in\mathbf{K}_n}\left\|v_j\right\|_{L_1(\Lambda_\bk)}^p\right)^{1/p}\cr
&\leq& C(n) \|v\|_{L_1(\Omega;\ell_p(\R^J))}\leq C'(n)\|v\|_{L_p(\Omega;\ell_2(\R^j))},
\end{eqnarray}where we applied after H\"older's inequality (in the third line) and the fact that
\begin{equation*}
\left\|\varphi_{j,n-1,\bk}\right\|_{L_\infty(\Omega)}=2^{n-2}\left\|\varphi_j\right\|_{L_\infty(\Omega)}
\end{equation*}
which can be easily verified using $\varphi_{j,n-1,\bk}=2^{n-2}\varphi_j\left(2^{n-1}\cdot-\bk/2\right)$.

Consider the family of linear operators $2^{-2n/q}\mathbf W''_n\mathbf S_n:W_s^q(\Omega;\mathbb R^J)\to \ell_{q,2}(\mathbb{Z}^2\times J^2)$ ordered by $n$, where the norm of the latter is generally defined as
\begin{equation*}
\|\mathbf a[\bk;i,j]\|_{\ell_{p,q}}=\left(\sum_{\bk\in\Z^2}\left(\sum_{i,j=1}^J |\mathbf a[\bk;i,j]|^q\right)^{p/q}\right)^{1/p}
\end{equation*}
Based upon the above observation on $\mathbf S_n$, and the boundedness of the operator $\mathbf W''_n$ as a matrix, we have
\begin{equation*}
\|2^{-2n/q}\mathbf W''_n\mathbf S_nv\|_{\ell_{q,2}}\leq C(n)\|v\|_{L_1(\Omega;\ell_q(\mathbb R^J))}\leq C'(n)\|v\|_{W_s^q(\Omega;\ell_2(\R^J))}
\end{equation*}
Since we have proved that, for any given $v\in W_s^p(\Omega;\mathbb R^J)$,
\begin{equation*}
\lim_{n\to\mathcal{1}}\|\mathbf W''_n\mathbf S_nv\|_{\ell_q(\mathbf{K}_n;\ell_2(\mathbb R^{J^2}))}=\|\mathbf D''v\|_{L_q(\Omega;\ell_2(\mathbb R^{J^2}))}
\end{equation*}
we have
\begin{equation*}
\sup_n\|2^{-2n/p}\mathbf W''_n\mathbf S_nv\|_{\ell_{q,2}}=\sup_n\|\mathbf W''_n\mathbf S_nv\|_{\ell_q(\mathbf{K}_n;\ell_2(\mathbb R^{J^2}))}<\mathcal{1}.
\end{equation*}
By resonance theorem, $\{2^{-2n/q}\mathbf W''_n\mathbf S_n\}_{n=1}^\mathcal{1}$ is uniformly bounded by some constant, i.e.
\begin{equation}\label{equi_part_1}
\|\mathbf W''_n\mathbf S_n v\|_{\ell_q(\mathbf K_n;\ell_2(\mathbb R^{J^2}))}\leq C_1\|v\|_{W_s^q(\Omega;\ell_2(\mathbb R^J))}
\end{equation}
based on which the rest is justified by Sobolev's inequality.

Applying the bound of $\mathbf S_n$ again, we have
\begin{eqnarray*}\label{equi_part_2}
\|\mathbf W'_n\mathbf T_nu-\mathbf S_nv\|_{\ell_p(\mathbf{K}_n;\ell_2(\mathbb R^J))}&=&\|\mathbf S_n(\mathbf D' u-v)\|_{\ell_p(\mathbf{K}_n;\ell_2(\mathbb R^J))}\cr
&\leq&\|\mathbf S_n\|_{op}\|(\mathbf D' u-v)\|_{L_p(\Omega;\ell_2(\mathbb R^J))}\cr
&\leq&C_2(n)\left(\|u\|_{W_{2s}^p\left(\Omega\right)}+\|v\|_{W_{s}^q(\Omega;\ell_2(\mathbb R^J))}\right)
\end{eqnarray*}
where the Sobolev's inequality is applied in the last inequality. Since $$\lim_{n\to\infty}\|\mathbf W'_n\mathbf T_nu-\mathbf S_nv\|_{\ell_p(\mathbf{K}_n;\ell_2(\mathbb R^J))}=\left\|\mathbf D'u-v\right\|_{L_p(\Omega;\ell_2(\R^J))},$$ following a similar argument using the resonance theorem, we have
\begin{equation}\label{equi_part_2}
\|\mathbf W'_n\mathbf T_nu-\mathbf S_nv\|_{\ell_p(\mathbf{K}_n;\ell_2(\mathbb R^J))}\leq C\left(\|u\|_{W_{2s}^p\left(\Omega\right)}+\|v\|_{W_{s}^q(\Omega;\ell_2(\mathbb R^J))}\right).
\end{equation}

Observe that $|x^q-y^q|\leq q(\max\{x,y\})^{q-1}|x-y|\leq q(y+|x-y|)^{q-1}|x-y|$ for $x,y\ge0$ and $q\geq 1$. Using (\ref{equi_part_1}) and (\ref{equi_part_2}), for any $\left(u^*,v^*\right)$ in the fixed unit neighborhood $B((u,v);\delta)$ (with $\delta\leq 1$), we have
\begin{eqnarray}
&&\left|\|\mathbf W''_n\mathbf S_nv^*\|^q_{\ell_q(\mathbf K_n;\ell_2(\R^{J^2}))}-\|\mathbf W''_n\mathbf S_nv\|^q_{\ell_q(\mathbf K_n;\ell_2(\R^{J^2}))}\right|\cr
&\leq&q\left(\max\left\{\|\mathbf W''_n\mathbf S_nv^*\|_{\ell_q(\mathbf K_n;\ell_2(\R^{J^2}))},\|\mathbf W''_n\mathbf S_nv\|_{\ell_q(\mathbf K_n;\ell_2(\R^{J^2}))}\right\}\right)^{q-1}\cr
& &\hspace*{2in}\left|\|\mathbf W''_n\mathbf S_nv^*\|_{\ell_q(\mathbf K_n;\ell_2(\R^{J^2}))}-\|\mathbf W''_n\mathbf S_nv\|_{\ell_q(\mathbf K_n;\ell_2(\R^{J^2}))}\right|\cr
&\leq&q\left(C_1\|v\|_{W_{s}^q(\Omega;\ell_2(\mathbb R^J))}+C_1\|v^*-v\|_{W_{s}^q(\Omega;\ell_2(\mathbb R^J))}\right)^{q-1}\cdot C_1\|v^*-v\|_{W_{s}^q(\Omega;\ell_2(\mathbb R^J))}\cr
&\leq&C''(v)\left\|v^*-v\right\|_{W_{s}^q(\Omega;\ell_2(\mathbb R^J))}
\end{eqnarray}
where $\ C''(v)=q\left(\|v\|_{W_{s}^q(\Omega;\ell_2(\mathbb R^J))}+1\right)^{q-1}C_1^q$. By the same argument, we obtain
\begin{eqnarray}
&&\left|\|\mathbf W'_n\mathbf T_nu^*-\mathbf S_nv^*\|^p_{\ell_p(\mathbf K_n);\ell_2(\R^{J})}-\|\mathbf W'_n\mathbf T_nu-\mathbf S_nv\|^p_{\ell_p(\mathbf K_n);\ell_2(\R^{J})}\right|\cr
&\leq& C'(u,v)\left(\|u^*-u\|_{W_{2s}^p(\Omega)}+\|v^*-v\|_{W_{s}^q(\Omega;\ell_2(\R^J))}\right).
\end{eqnarray}
for $\ C'(u,v)=p\left(\|\mathbf D'u-v\|_{L_p(\Omega;\ell_2(\R^J))}+1\right)^{p-1}C_2^p\leq p\left(\|u\|_{W_{2s}^p\left(\Omega\right)}+\|v\|_{W_{s}^q(\Omega;\ell_2(\mathbb R^J))}+1\right)^{p-1}C_2^p$. Therefore,
\begin{eqnarray}
&&\left|\tilde{V}_n(u^*,v^*)-\tilde{V}_n(u,v)\right|\cr
&\leq&C(u,v)\left(\|u^*-u\|_{W_{2s}^p\left(\Omega\right)}+\|v^*-v\|_{W_{s}^q(\Omega;\ell_2(\mathbb R^J))}\right),
\end{eqnarray}
where $C(u,v)=\nu_1C'(u,v)+\nu_2C''(v)$. Since $C=C(u,v)$ does not depend on $n$, we can conclude that $\tilde{V}_n$ is equicontinuous.

\section{Algorithm and Simulations}\label{S:Algorithm}

In this section, we propose an algorithm solving the general model. The algorithm is derived using the idea of the alternating direction method of multipliers (ADMM) \cite{gabay1976dual,bertsekas1989parallel,eckstein1992douglas} which was later rediscovered as the split Bregman algorithm \cite{GoldO,cai2009split}. We also present numerical simulations of the proposed algorithm on one synthetic image and compare it with the analysis based model \eqref{analysis_discrete}. Note that the focus of this paper is to propose the general model and provide a unified asymptotic analysis of the model to draw connections of it with variational models. Therefore, we shall skip convergence analysis of the proposed algorithm and will not provide a comprehensive numerical studies of the algorithm. We only present one example as a proof of concept.

We restrict our attention to the case $p=1, q=1,2$ of the problem \eqref{F_n_def}, which is restated as follows with simplified notation:
$$F(\mathbf{u},\mathbf{v})=\nu_1\|\mathbf W'\mathbf{u}-\mathbf{v}\|_1+\nu_2\|\mathbf W''\mathbf{v}\|^q_q+\frac{1}{2}\|\mathbf A\mathbf{u}-\mathbf{f}\|_2^2.$$
To yield a computationally simple algorithm, we consider the following equivalent problem $\tilde F$ instead
\begin{equation}\label{UnifiedModel:Numerical}
\tilde F(\mathbf{u},\mathbf{v},\mathbf{d},\mathbf{e})=\nu_1\|\bd-\mathbf{v}\|_1+\nu_2\|\be\|^q_q+\frac{1}{2}\|\mathbf A\mathbf{u}-\mathbf{f}\|_2^2+\frac{\mu}{2}(\|\bW' \bu-\bd\|_2^2+\|\bW'' \bv-\be\|_2^2).
\end{equation}
subject to the constraint
\begin{equation*}
\left\{\begin{array}{ll}
\bW'\bu-\bd=0 \\ \bW''\bv-\be=0\\
\end{array}
\right.
\end{equation*}
The augmented Lagrangian of the above problem is
\begin{eqnarray}
L_\mu(\bu,\bv;\bd,\be;\bp,\bq)&=&\nu_1\|\bd-\mathbf{v}\|_1+\nu_2\|\be\|^q_q+\frac{1}{2}\|\mathbf A\mathbf{u}-\mathbf{f}\|_2^2+\frac{\mu}{2}(\|\bW'\bu-\bd\|_2^2+\|\bW''\bv-\be\|_2^2)\cr
& &\hspace*{0.2in}+\mu\left(\left<\bp,\bW'\bu-\bd\right>+\left<\bq,\bW''\bv-\be\right>\right).
\end{eqnarray}
Given a step-length $0\leq\delta<1$, the augmented Lagrangian method \cite{hestenes1969multiplier,powell1969method,glowinski1989augmented} is given as follows
\begin{equation}\label{E:ALM}
\left\{\begin{array}{ll}
(\bu_{k+1},\bv_{k+1},\bd_{k+1},\be_{k+1})=\arg\min_{\bu,\bv,\bd,\be}\{\nu_1\|\bd-\mathbf{v}\|_1+\nu_2\|\be\|^q_q+\frac{1}{2}\|\mathbf A\mathbf{u}-\mathbf{f}\|_2^2\\
\ \ \ \ \ \ \ \ \ \ \ \ \ \ \ \ \ \ \ \ \ \ \ \ \ \ \ \ \ \ \ \ \ \ \ \ \ \ \ \
+\frac{\mu}{2}(\|\bW' \bu-\bd+\bp_k\|_2^2+\|\bW'' \bv-\be+\bq_k\|_2^2)\}\\
\bp_{k+1}=\bp_k+\delta\left(\bW'\bu_{k+1}-\bd_{k+1}\right)\\
\bq_{k+1}=\bq_k+\delta\left(\bW''\bv_{k+1}-\be_{k+1}\right).
\end{array}
\right.
\end{equation}
Following the idea of the alternating direction method of multipliers (ADMM) \cite{gabay1976dual,bertsekas1989parallel,eckstein1992douglas}, we obtain the following algorithm from \eqref{E:ALM} by minimizing the variables in the first subproblem alternatively:
\begin{equation}\label{E:ADMM}
\left\{\begin{array}{ll}
\bu_{k+1}=\arg\min_{\bu}\{\frac{1}{2}\|\mathbf A\mathbf{u}-\mathbf{f}\|_2^2+\frac{\mu}{2}\|\bW'\bu-\bd_k+\bp_k\|_2^2\}\\
\bv_{k+1}=\arg\min_{\bv}\{\nu_1\|\bd_k-\bv\|_1+\frac{\mu}{2}\|\bW'' \bv-\be_k+\bq_k\|_2^2\}\\
\bd_{k+1}=\arg\min_{\bd}\{\nu_1\|\bd-\bv_{k+1}\|_1+\frac{\mu}{2}\|\bW' \bu_{k+1}-\bd+\bp_k\|_2^2\}\\
\be_{k+1}=\arg\min_{\be}\{\nu_2\|\be\|^q_q+\frac{\mu}{2}\|\bW'' \bv_{k+1}-\be+\bq_k\|_2^2\}\\
\bp_{k+1}=\bp_k+\delta\left(\bW'\bu_{k+1}-\bd_{k+1}\right)\\
\bq_{k+1}=\bq_k+\delta\left(\bW''\bv_{k+1}-\be_{k+1}\right).
\end{array}
\right.
\end{equation}
Note that each of the subproblem of \eqref{E:ADMM} has a closed-form expression. Thus, our proposed algorithm solving the general model \eqref{UnifiedModel:Numerical} is written as
\begin{equation}\label{E:Algorithm}
\left\{\begin{array}{ll}
\bu_{k+1}=\left(\bA^T\bA+\mu \mathbf I\right)^{-1}\left(\bA^T\bf+\mu\bW'^T(\bd_k-\bp_k)\right)\\
\bv_{k+1}=\mathcal T_{\nu_1/\mu}\left(\bW''^T(\be_k-\bq_k)-\bd_k\right)+\bd_k\\
\bd_{k+1}=\mathcal T_{\nu_1/\mu}\left(\bW'\bu_{k+1}+\bp_k-\bv_{k+1}\right)+\bv_{k+1}\\
\be_{k+1}=\left\{\begin{array}{ll}
\mathcal T_{\nu_2/\mu}\left(\bW''\bv_{k+1}+\bq_k\right)\ \ \ (q=1)\\
\frac{\mu}{2\nu_2+\mu}\left(\bW''\bv_{k+1}+\bq_k\right)\ \ \ (q=2)\\
\end{array}
\right.\\
\bp_{k+1}=\bp_k+\delta\left(\bW'\bu_{k+1}-\bd_{k+1}\right)\\
\bq_{k+1}=\bq_k+\delta\left(\bW''\bv_{k+1}-\be_{k+1}\right).
\end{array}
\right.
\end{equation}

Finally, we present results of image deblurring using the general model \eqref{UnifiedModel:Numerical} solved by algorithm \eqref{E:Algorithm}, and compare the results with the analysis based model solved by the split Bregman algorithm/ADMM. For simplicity, we used piecewise linear B-spline framelets given by Example \ref{example:linear} for all the wavelet frame transforms used in the analysis based model and the general model. The blur kernel is a known filter of size $5\times 5$. Mild Gaussian white noise is added to form the observed blurry and noisy image $\mathbf{f}$. All parameters of the models and algorithms are manually chosen to obtain optimal reconstruction results. The original image, observed blurry and noisy image, restored images using the analysis based model \eqref{analysis_discrete} and the general model \eqref{UnifiedModel:Numerical} are presented in Figure \ref{fig_a_u}, where we can see that the general model outperforms the analysis based model as expected.

\begin{figure}
\centering
\includegraphics[width=3.5cm]{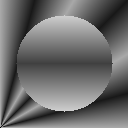}
\includegraphics[width=3.5cm]{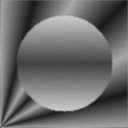}
\includegraphics[width=3.5cm]{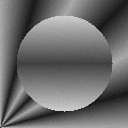}
\includegraphics[width=3.5cm]{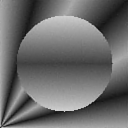}
\caption{From left to right, the original image, the blurry and noisy, and the deblurring images using the analysis based model \eqref{analysis_discrete} with $a=0.2$ and the general model \eqref{UnifiedModel:Numerical} with $q=1$, $\nu_1=0.2$, $\nu_2=0.2$. The PSNR values of the recovered images from the analysis based model and the general model are 37.9372 and 38.5268 respectively.}\label{fig_a_u}
\end{figure}

\bibliographystyle{ieeetr}
\bibliography{ReferenceLibrary}

\end{document}